\documentclass[a4paper,11pt]{amsart}

\usepackage{amsmath,amssymb}
\usepackage{amscd}
\usepackage[all,cmtip]{xy}
\usepackage{framed}
\usepackage{hyperref}
\usepackage[inline]{enumitem}  
\usepackage{float}
\usepackage{multicol}
\usepackage{array}
\usepackage{makecell}
\newcolumntype{P}[1]{>{\centering\arraybackslash}p{#1}}
\newcolumntype{M}[1]{>{\centering\arraybackslash}m{#1}}
\usepackage{lscape}
\usepackage{times}

\usepackage{setspace}

\theoremstyle{plain}
\newtheorem{thm}{Theorem}[section]
\newtheorem{theorem}[thm]{Theorem}

\newtheorem{lemma}[thm]{Lemma}

\theoremstyle{definition}

\newtheorem{remark}[thm]{Remark}

\newtheorem{definition}[thm]{Definition}
\newtheorem*{claim*}{Claim}
\newtheorem*{claimproof*}{Proof of Claim}

\newtheorem{example}[thm]{Example}

\numberwithin{equation}{section}
\newcommand{\sL}{{\mathcal L}}
\newcommand{\sO}{{\mathcal O}}
\newcommand{\C}{{\mathbb C}}
\newcommand{\BG}{{\mathbb G}}
\newcommand{\BH}{{\mathbb H}}
\newcommand{\BP}{{\mathbb P}}

\newcommand{\R}{{\mathbb R}}
\newcommand{\Z}{{\mathbb Z}}

\newcommand{\SL}{{\rm SL}}
\newcommand{\PSL}{{\rm PSL}}

\newcommand{\Amp}{\operatorname{Amp}}
\newcommand{\Aut}{\operatorname{Aut}}

\newcommand{\Grass}{\operatorname{Grass}}
\newcommand{\rank}{\operatorname{rank}}

\newcommand{\id}{\operatorname{id}}

\newcommand{\trace}{\operatorname{trace}}

\def\NS{\mathop{\rm NS}\nolimits}


\begin{document}

\title[Automorphisms of Hilbert schemes]{Automorphisms of Hilbert schemes of Cayley's K3 surfaces}
\author{Kwangwoo Lee}
\address{Department of Mathematics, Ewha Womans University, 52 Ewhayeodae-gil, Seodaemun-gu, Seoul, Republic of Korea}
\email{klee0222@gmail.com}

\subjclass{Primary: 14C05, 14J28, 14J50; Secondary: 20H10}
\keywords{K3 surface, Hilbert scheme, Automorphism, Beauville involution, Fuchsian group}
\thanks{The author was supported by National Research Foundation of Korea(NRF-2020R1I1A1A01073662) and National Research Foundation of Korea(NRF-2021R1A4A3033098) grant funded by the Korea government(MSIT)}

\date{\today}

\maketitle

\begin{abstract}
We prove that the automorphism group of Hilbert square of a Cayley's K3 surface of Picard number $2$ is the free product of three cyclic groups of order two. The generators are three Beauville involutions.
\end{abstract}

\section{Introduction}
Let $X$ be a nonsingular projective surface over $\C$, and let $X^{[m]}$ be the Hilbert scheme of $m$ points on $X$, the space parametrizing $0$-dimensional subschemes $Z\subset X$ of length $m$. By the result of Beauville, the Hilbert scheme of points of a K3 surface is a hyperk$\ddot{\text{a}}$hler manifold (\cite{B2}).  A \textit{hyperk$\ddot{\text{a}}$hler manifold} is a simply connected compact K$\ddot{\text{a}}$hler manifold $Y$ such that $H^0(Y,\Omega_Y^2)=\mathbb{C}\sigma_Y$, where $\sigma_Y$ is an everywhere non-degenerate holomorphic $2$-form on $Y$. In particular, a K3 surface is a $2$-dimensional hyperk$\ddot{\text{a}}$hler manifold. For a K3 surface $X$, the Hilbert scheme $X^{[m]}$ of length $m$ of $X$ is a hyperk$\ddot{\text{a}}$hler manifold of dimension $2m$ with \textit{Beauville-Bogomolov-Fujiki's form} (\cite{B2}) on $H^2(X^{[m]},\mathbb{Z})$ and we have a natural Hodge isometry
\begin{equation} \label{BBF}
H^2(X^{[m]},\mathbb{Z})\cong H^2(X,\mathbb{Z})\oplus \mathbb{Z}[e],
\end{equation}
where $e:=E/2$ with self-intersection number $(e^2)=-2m+2$ and $E$ is the exceptional divisor of the \textit{Hilbert-Chow} morphism 
\begin{equation}\label{Hilb-Chow}
\epsilon: X^{[m]}\rightarrow X^{(m)}=X^m/S_m.
\end{equation}
Here $X^{(m)}$ is the $m$-th symmetric product of $X$ defined by the quotient of Cartesian product $X^m$ by the symmetric group $S_m$.
Moreover, we have the natural injections of automorphisms
\begin{equation} \label{inclusion}
\Aut(X)\hookrightarrow \Aut(X^{[m]})
\end{equation}
and  
\begin{equation}\label{injection}
\Aut(X^{[m]})\hookrightarrow O(H^2(X^{[m]},\mathbb{Z}),q_{X^{[m]}}), \hspace{5mm} f\mapsto f^*,
\end{equation}
where $O(H^2(X^{[m]},\mathbb{Z}),q_{X^{[m]}})$ is the isometry group of $H^2(X^{[m]},\mathbb{Z})$ with the quadratic form $q_{X^{[m]}}$.

In \cite{O1}, Oguiso considered a K3 surface $S$ with Picard lattice $NS(S)=\Z h_1\oplus \Z h_2$ where the intersection matrix 
\begin{equation}\label{Picard lattice of Oguiso}
[(h_i,h_j)]=\begin{bmatrix} 4& 2\\ 2&-4\end{bmatrix}.
\end{equation} 
He constructed an isometry of $NS(S)$ given by the multiplication of $\eta^6$ on $NS(S)$, where $h_2=\eta:=\frac{1+\sqrt{5}}{2}$ is a basis of the lattice $NS(S)$ together with $h_1:=1$. Then by the global Torelli theorem for K3 surfaces, it induces an automorphism $g$ of $S$ with positive entropy without fixed points. It is also known as the \textit{Cayley-Oguiso} automorphism in \cite{FGvGvL}. This was introduced by Cayley. In \cite{Cay}, he constructed a K3 surface with an automorphism of infinite order. Let $X$ be a Cayley's K3 surface: the determinantal quartic surface in $\BP^3$ with homogeneous coordinates $\textbf{x}=[x_0:x_1:x_2:x_3]$ defined by
\begin{equation}\label{Cayley's K3 surface}
X:=(\det M(\textbf{x})=0)\subset \BP^3_{\textbf{x}},
\end{equation}
where $M(\textbf{x})$ is the generic $4\times 4$ matrix whose $(k,j)$ entry is $\Sigma_i a_{ijk}x_i$.
By a genericity assumption of $M$, $\rank M(\textbf{x})=3$ for all $\textbf{x}\in X$ and $X$ is a smooth quartic K3 surface. In particular, let $X$ be a Cayley's K3 surface with the rank of $NS(X)=2$. Then by \cite[Proposition 2.2]{FGvGvL}, $X\cong S$. 

For a very ample divisor $D_n:=\eta^{2n}$ with $n\in \Z$ on $X$, we have an isomorphism $\phi_{D_n}:X\xrightarrow{\cong} X_n\subset \BP^3$ (cf. \cite[\S 5.1]{L}). Then $\phi_{D_n}$ defines a Beauville birational involution $\iota_n$ on the Hilbert scheme $X^{[2]}$ of $X$. Since $X$ has no line, $\iota_n$ is a biregular involution of $X^{[2]}$ by \cite{B1}. In \cite[Theorem 2]{L}, the author showed that all Beauville involutions are generated by $\iota_0,\iota_1,\iota_2$ and the \textit{natural} automorphism $g$ induced from $X$, i.e., an image of \eqref{inclusion}. 

In this paper, we will show the follow theorem:
\begin{theorem}\label{main theorem1}
$\Aut(X^{[2]})\cong \langle \iota_0,\iota_1,\iota_2\rangle\cong \Z_2\ast \Z_2\ast \Z_2$. 
\end{theorem}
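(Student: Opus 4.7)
The plan is to pass to the action of $\Aut(X^{[2]})$ on the hyperbolic plane attached to $\NS(X^{[2]})$ and then apply Poincar\'e's polygon theorem. By the global Torelli theorem for irreducible holomorphic symplectic manifolds of K3$^{[2]}$-type (Verbitsky, Markman), the image of the injection \eqref{injection} is the group of monodromy Hodge isometries of $H^2(X^{[2]},\Z)$ preserving the K\"ahler cone. Because $X$ has Picard number $2$, standard glueing arguments with the discriminant forms of $\NS(X^{[2]})$ and $T(X^{[2]})$ show that every such isometry is determined, up to a finite ambiguity absorbed by monodromy orientation and the choice of chamber, by its restriction to $\NS(X^{[2]}) = \Z h_1 \oplus \Z h_2 \oplus \Z e$. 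Since this rank-$3$ lattice has signature $(1,2)$ (the Gram matrix \eqref{Picard lattice of Oguiso} has signature $(1,1)$, and $e^2 = -2$ by \eqref{BBF}), its positive cone descends to the hyperbolic plane $\BH^2$, and $\Aut(X^{[2]})$ acts discretely on $\BH^2$ as the stabilizer of the image $\overline{\mathcal K}$ of the ample cone.

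The next step is to identify the walls of $\overline{\mathcal K}$. By the Bayer--Macr\`i description of nef and movable cones for K3$^{[n]}$-type manifolds, walls come from primitive classes $\delta \in \NS(X^{[2]})$ with $\delta^2 \in \{-2,-10\}$, subject to a divisibility condition when $\delta^2 = -10$. My plan is to enumerate such classes in the explicit Picard lattice, use the reflection group generated by $(-2)$-classes to move to a single chamber, and confirm that the boundary of this chamber in $\BH^2$ consists of exactly three geodesics. Each boundary geodesic is fixed by a Picard--Lefschetz reflection in the corresponding $\delta$; a direct comparison of cohomological actions should identify this reflection with the action of the Beauville involution $\iota_n$ associated to $D_n = \eta^{2n}$, for $n \in \{0,1,2\}$.

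Finally I would analyze the pairwise incidence of the three walls. Two reflections in geodesics of $\BH^2$ generate a finite, parabolic, or hyperbolic dihedral group according to whether the geodesics meet in the interior, at $\partial\BH^2$, or are ultra-parallel; the trichotomy is controlled by the signature and discriminant of the $2\times 2$ sublattice spanned by the corresponding reflection vectors. A direct Gram-matrix computation for the three pairs of wall classes should place each pair in the parabolic case, so that $\overline{\mathcal K}$ is an ideal hyperbolic triangle. Poincar\'e's polygon theorem then yields
\[
\langle \iota_0,\iota_1,\iota_2 \mid \iota_0^2 = \iota_1^2 = \iota_2^2 = 1 \rangle \cong \Z_2 \ast \Z_2 \ast \Z_2
\]
and shows that $\overline{\mathcal K}$ is a genuine fundamental domain, whence $\Aut(X^{[2]}) = \langle \iota_0,\iota_1,\iota_2\rangle$. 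The natural automorphism $g$ induced from $X$ is thereby forced to be an infinite-order word in the $\iota_n$'s, consistent with \cite[Theorem~2]{L}.

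The chief obstacle I anticipate is the wall enumeration. With a small-discriminant Picard lattice, the Bayer--Macr\`i numerical criteria admit only finitely many wall classes modulo reflections, but certifying that \emph{exactly} three such classes bound $\overline{\mathcal K}$---in particular excluding any spurious $(-10)$-wall with the required divisibility---requires a careful Diophantine analysis on the explicit intersection form. Once those three walls are pinned down, the hyperbolic-geometry conclusion (ideal triangle implies $\Z_2\ast\Z_2\ast\Z_2$) is essentially a routine application of Poincar\'e's theorem.
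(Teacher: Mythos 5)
Your overall strategy (pass to the signature $(1,2)$ lattice $NS(X^{[2]})$, work in $\BH^2$, and use hyperbolic geometry to extract the free product) is in the same spirit as the paper's second step, but the central geometric identification on which your argument rests is wrong. You propose to realize each $\iota_n$ as a Picard--Lefschetz reflection in a wall class $\delta$ with $\delta^2\in\{-2,-10\}$, so that the closure of the ample cone becomes an ideal triangle and Poincar\'e's polygon theorem applies. But the $\iota_n$ are biregular automorphisms of $X^{[2]}$ (this is why the hypothesis that $X$ contains no line matters), so they \emph{preserve} the ample cone; a reflection in a wall of the ample cone would map it to the adjacent chamber and could only be a flop, not an automorphism. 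Concretely, the $+1$-eigenspace of $\iota_0^*$ on $NS(X^{[2]})\otimes\R$ is the single line spanned by $(1,-1,0)$ (an ample class of square $2$), and likewise for $\iota_1^*,\iota_2^*$ with $(1,-1,1)$ and $(2,-1,3)$: on $\BH^2$ these are elliptic half-turns about \emph{interior} points, not reflections in geodesics. Consequently the image of the ample cone in $\BH^2$ cannot be an ideal triangle (it is preserved by a non-elementary group and is bounded by the infinite orbit of walls such as $e^\perp$), your pairwise "parabolic incidence" computation has no classes to apply to, and Poincar\'e's theorem in the form you invoke does not produce the group $\langle\iota_0,\iota_1,\iota_2\rangle$. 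Also note the paper's Lemma on $(-10)$-classes shows there are none in this lattice, so that part of your wall enumeration is vacuous.

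There are two further gaps even if the geometric picture were repaired. First, for three elliptic involutions (half-turns) one needs a criterion such as Beardon's theorem on triples of elliptic involutions -- which requires checking that suitable products (e.g.\ $\iota_2^*\iota_1^*\iota_0^*$ and, in the paper, all words of the form $TX_i$) are hyperbolic -- rather than Poincar\'e's polygon theorem for reflection groups; the two settings give the same abstract group $\Z_2\ast\Z_2\ast\Z_2$ but by different arguments. Second, your reduction of $\Aut(X^{[2]})$ to a lattice computation via Verbitsky--Markman Torelli plus "standard glueing arguments" is only sketched, and in any case it does not by itself show that the $\iota_n$ \emph{generate}: the paper instead proves generation by a direct descent, showing that for any non-natural $f$ the $e$-coefficient of $f^*(e)$ is negative and can be strictly increased by composing with suitable $\iota_k$, until one lands on a natural automorphism $g^l=\iota_k f$, and then uses $g=\iota_0\iota_1\iota_2$. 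Some replacement for this step is needed in your outline.
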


The first equivalence in Theorem \ref{main theorem1} is also known in \cite{O2} without proof. Our strategies are the followings. For the first equivalence, we use the characterization of natural automorphisms. Next we use hyperbolic geometry to prove the second equivalence.

\begin{remark}\label{generic Picard number case}
For Hilbert squares of a K3 surface with Picard number one, the automorphism group is either trivial or $\Z_2$ generated by a non-symplectic involution \cite{BCNS,Cat}.
\end{remark}

\section{Preliminaries}
In this section we recall some useful results on Hilbert squares of projective K3 surfaces and on hyperbolic geometry which we will use in the following sections.
\subsection{Hilbert squares of K3 surfaces}
For a K3 surface $X$, the Hilbert scheme $X^{[m]}$ of length $m$ of $X$ is a hyperk$\ddot{\text{a}}$hler manifold of dimension $2m$ by Beauville \cite{B2}. There exists an injective morphism $i: H^2(X,\C)\rightarrow H^2(X^{[m]},\C)$ such that 
\begin{equation}
H^2(X^{[m]},\C)=i(H^2(X,\C))\oplus \C[E]
\end{equation}
 with a quadratic form $q_{X^{[m]}}$ on $X^{[m]}$ satisfying $q_{X^{[m]}}(i(\alpha))=q(\alpha)$ for $\alpha\in H^2(X,\C)$, where $q$ is the quadratic form on $X$. Note that $E$ is the exceptional divisor of \eqref{Hilb-Chow}. For $e:=E/2$, $H^2(X^{[m]},\mathbb{Z})=i(H^2(X,\Z))\oplus \Z[e]$.

For any line bundle $L$ on $X$, the tensor product $\otimes_{i=1}^m p_i^*(L)$ has a natural $S_m$-invariant structure, and taking $S_m$-invariants defines a line bundle $L(m)$ on $X^{(m)}$, where $p_i:X^m\rightarrow X$ is the projection onto the $i$-th factor. We also define the pullback via the Hilbert-Chow morphism \eqref{Hilb-Chow} to the Hilbert scheme:
\begin{equation}
L[m]:=\epsilon^*L(m).
\end{equation}
By construction we have the first Chern class $c_1(L[m])=i(c_1(L))$.

For any $0$-dimensional subscheme $(Z,\mathcal{O}_Z)$ of length $h^0(\mathcal{O}_Z)=k+1$, if the restriction map $H^0(L)\rightarrow H^0(L\otimes \mathcal{O}_Z)$ is surjective, then $L$ is called \textit{k-very ample} for an integer $k\geq 0$ (\cite{BS}). By definition, $L$ is $0$-very ample if and only if $L$ is generated by its global sections, and $L$ is $1$-very ample if and only if $L$ is very ample.
For the Grassmannian $\Grass(m,H^0(L))$ of all $m$-dimensional quotients of $H^0(L)$, we have a rational map
\begin{equation}\label{map to Grass}
\phi_L: X^{[m]}\dashrightarrow \Grass(m,H^0(L)),
\end{equation}
sending $(Z,\sO_Z)\in X^{[m]}$ to the quotient $H^0(L)\rightarrow H^0(L\otimes \sO_Z)$.
If $L$ is $(m-1)$-very ample, then $\phi_L$ is a morphism and $\phi_L$ is embedding if and only if $L$ is $m$-very ample (cf. \cite{CG}).



Let $X$ be a K3 surface with Picard lattice $NS(X)=\Z h_1\oplus \Z h_2$ whose intersection matrix is given in \eqref{Picard lattice of Oguiso}.
Since $X$ has no $(-2)$-curve, the ample cone of $X$ is just the positive cone, i.e.,
\begin{equation}\label{ample cone of X}
\begin{array}{lcl}
\Amp(X)&=&\{(x,y)\in NS(X)\otimes \R\mid (x,y)^2>0 \text{ and } x>0\}\\
&=&\{(x,y)\in NS(X)\otimes \R\mid x>0 \text{ and }\frac{1-\sqrt{5}}{2}<\frac{y}{x}<\frac{1+\sqrt{5}}{2}\}.\\
\end{array}
\end{equation}

For an ample line bundle $L$ on $X$, $\otimes_{i=1}^m p_i^*(L)$ is ample on the Cartesian product $X^m$, hence $L(m)$ is ample on the symmetric product $X^{(m)}$. Since the Hilbert-Chow morphism is birational, the induced line bundle $L[m]$ is big and nef on $X^{[m]}$. However, since $L[m]$ has degree zero on the fibers of the Hilbert-Chow morphism, $L[m]$ is not ample. Hence, the line bundle $L[m]$ lies on the boundary of the nef cone of $X^{[m]}$. 

\begin{lemma}\label{ample divisors}
For any ample line bundle $L$ on the K3 surface $X$ with Picard lattice $NS(X)=\Z h_1\oplus \Z h_2$, the line bundle $L[2]-e$ is ample on $X^{[2]}$.
\end{lemma}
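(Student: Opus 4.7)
The plan is to realize $L[2] - e$ as the pullback of the Plücker polarization of $\Grass(2, H^0(L))$ under a finite morphism, so that ampleness follows immediately.

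First I would show that every ample line bundle $L$ on $X$ is very ample. The intersection form $L^2 = 4(a^2 + ab - b^2)$ on $NS(X)$ from \eqref{Picard lattice of Oguiso} is always a multiple of $4$; in particular $NS(X)$ contains no $(-2)$-class, no isotropic class, no class of square $2$, and every positive square is at least $4$. Consequently none of the exceptional configurations of Saint-Donat's very-ampleness criterion for polarized K3 surfaces can occur: there is no $(-2)$-curve to spoil base-point-freeness or to form a unigonal decomposition, no elliptic pencil $|E|$ with $L\cdot E = 2$ (hyperelliptic case), no square-$2$ class $C$ with $L = 2C$, and no polarization of square $2$. Therefore $L$ is very ample, and by the statement recorded just before the lemma (applied with $m = 2$), the rational map $\phi_L$ of \eqref{map to Grass} is in fact a morphism.

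Next I would verify that $\phi_L : X^{[2]} \to \Grass(2, H^0(L))$ is finite. Geometrically, $\phi_L$ sends a length-$2$ subscheme $Z \subset X$ to the projective line in $\BP(H^0(L)^{\vee})$ spanned by $Z$ under the embedding $X \hookrightarrow \BP(H^0(L)^{\vee})$ defined by $|L|$. If a curve $C \subset X^{[2]}$ were contracted to a point $[\ell] \in \Grass(2, H^0(L))$, then a $1$-parameter family of length-$2$ subschemes of $X$ would lie on the single line $\ell \subset \BP(H^0(L)^{\vee})$, forcing $\ell \subset X$. But a line on $X$ would be a smooth rational curve, and therefore a $(-2)$-curve, which the lattice analysis above excludes. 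Hence no curve is contracted and $\phi_L$ is finite.

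Finally, pulling back the tautological exact sequence on $\Grass(2, H^0(L))$ by $\phi_L$ identifies the pullback of the universal rank-$2$ quotient with the tautological rank-$2$ sheaf on $X^{[2]}$ attached to $L$, whose determinant has first Chern class $L[2] - e$. Thus $\phi_L^{\,*}\, \sO_{\Grass}(1) = L[2] - e$, and since the pullback of an ample line bundle under a finite morphism is ample, $L[2] - e$ is ample on $X^{[2]}$. The main obstacle I anticipate is the bookkeeping in Step 1: although each exceptional case in Saint-Donat's classification is ruled out by elementary arithmetic on $NS(X)$, enumerating the full list correctly (rather than only the most familiar hyperelliptic case) is essential to the argument.
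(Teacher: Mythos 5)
Your proof is correct, and it reaches the same endgame as the paper --- realizing $L[2]-e$ as $\phi_L^*\sO_{\BG}(1)$ for a finite morphism to the Grassmannian --- but the route through very-ampleness is genuinely different. The paper splits into two cases according to $L^2$ (always a positive multiple of $4$ here): for $L^2\geq 8$ it invokes Knutsen's numerical criterion to get that $L$ is $2$-very ample, so that $\phi_L$ is already an embedding and no finiteness argument is needed; only in the residual case $L^2=4$ does it fall back on very ampleness (quoting \cite[Lemma 12]{L}) together with the absence of lines on $X$ to get finiteness. You instead treat all ample $L$ uniformly: Saint-Donat's criterion plus the lattice arithmetic (no classes of square $-2$, $0$, or $2$ in $NS(X)$) gives very ampleness of every ample $L$, hence $\phi_L$ is a morphism by the $(m-1)$-very-ampleness statement with $m=2$, and the same ``no line on $X$'' argument gives finiteness across the board. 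Your version is arguably cleaner in that it needs only $1$-very ampleness and a single finiteness argument, at the cost of trading Knutsen's theorem for a complete enumeration of Saint-Donat's exceptional configurations (which, as you note, is where the bookkeeping lives); the paper's version buys an actual embedding for all but the degree-$4$ polarizations. One small point worth spelling out in your finiteness step: if the line $\ell$ is not contained in $X$, then $\ell\cap X$ is a finite scheme and therefore carries only finitely many length-$2$ subschemes, which is what actually forces $\ell\subset X$ when a positive-dimensional family is contracted to $[\ell]$.
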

\begin{proof}
Let $L$ be an ample line bundle on $X$.
 In \cite[Theorem 1.1]{Kn}, Knutsen showed that $L$ is $k$-very ample if and only if $L^2\geq 4k$ and there exists no effective divisor $D$ satisfying the conditions $(*)$ below:
\begin{equation}
\begin{array}{lcl}
\tag{*}& 2D^2\overset{(i)}{\leq} L.D\leq D^2+k+1\overset{(ii)}{\leq} 2k+2\\
&\text{ with equality in } (i) \text{ if and only if } L\sim 2D \text{ and } L^2\leq 4k+4,\\
&\text{ and equality in } (ii) \text{ if and only if } L\sim 2D \text{ and } L^2=4k+4.\\
\end{array}
\end{equation}
Since any effective divisor $D$ on $X$ has $D^2\geq 4$, for $k=2$, $L^2\geq 8$ implies that $L$ is $2$-very ample.

If $L$ is $2$-very ample line bundle, then $\phi_L$ in \eqref{map to Grass} is an embedding, hence $\phi_L^*\sO_{\BG}(1)$ is ample on $X^{[2]}$, where $\BG:=\Grass(2,H^0(L))$. As explained in \cite[\S 2]{BC}, it is $L[2]-e$. Next since $L^2>0$ and $L^2\equiv 0 \mod 4$, we suppose that $L^2=4$. In \cite[Lemma 12]{L}, we showed that any such $L=D_n:=\eta^{2n}$ ($n\in \Z$) and it is very ample, hence it defines an embedding $X\hookrightarrow \BP^3$. By this embedding we identify $\phi_{L}: X^{[2]}\rightarrow \Grass(2,H^0(L))$ with 
\begin{equation}
\phi_{L}: X^{[2]}\rightarrow \Grass(1,\BP^3), Z\mapsto \langle Z\rangle
\end{equation}
where $\langle Z\rangle$ is the one dimensional span of $Z$ in $\BP^3$.
Moreover, since $X$ has no line, $\phi_L$ is a finite morphism onto its image. Therefore, $\phi^*\sO_{\BG}(1)=L[2]-e$ is ample on $X^{[2]}$.
\end{proof}


\begin{theorem}\cite[Theorem 1]{HT}\label{ample divisors on Hilbert squares}
Let $F$ be a projective algebraic variety deformation equivalent to Hilbert schemes of a K3 surface. Fix an ample divisor $H$ on $F$. A divisor $C$ on $F$ is ample if $C.D>0$ for each divisor class $D$ satisfying $H.D>0$ and either
\begin{enumerate}
\item $D^2\geq -2$; or
\item $D^2=-10$ and $(D,H^2(F,\Z))=2\Z$.
\end{enumerate}
\end{theorem}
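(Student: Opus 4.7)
The plan is to prove this as a Nakai--Moishezon type criterion for hyperk\"ahler manifolds of $K3^{[2]}$-deformation type, by identifying precisely which negative-square classes cut the K\"ahler cone out of the positive cone.

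First, I would recall the hyperk\"ahler analogue of the positive-cone picture. For a projective hyperk\"ahler manifold $F$, by Huybrechts' projectivity/K\"ahler cone theorem, the K\"ahler cone is a chamber inside the positive cone $\{D\in H^{1,1}(F,\R):q(D)>0,\ (D,H)>0\}$, cut out by the hyperplanes $D^\perp$ as $D$ ranges over certain negative-square ``wall classes''. Since the BBF form has signature $(1,\rho-1)$ on $NS(F)\otimes\R$, any class $D$ with $q(D)\geq 0$ and $(H,D)>0$ automatically pairs positively with every element of the positive cone (reverse Cauchy--Schwarz on the positive component), so the nontrivial constraints come only from classes of strictly negative square.

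Second, I would classify the walls on $K3^{[2]}$-type fourfolds. By deformation invariance of the BBF lattice and monodromy, it suffices to identify wall classes up to the action of the monodromy group on $H^2(F,\Z)$ and to realize each such class, on some deformation of $F$ preserving it as a Hodge class, as the divisor class of an extremal contraction or flop. For fourfolds of $K3^{[2]}$-type the classification of extremal contractions yields exactly two families: divisorial contractions of a $\BP^1$-bundle onto a K3 surface, whose exceptional divisor has BBF square $-2$; and small contractions of Lagrangian planes $\BP^2\subset F$ (Beauville's rational contractions), whose associated wall divisors satisfy $q(D)=-10$ with divisibility $2$ in $H^2(F,\Z)$. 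The numerical invariants $q(D)$ and $(D,H^2(F,\Z))$ are computed directly from the normal bundle of the contracted locus together with the Fujiki constant.

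Third, combining these ingredients, any divisor $C$ in the positive cone that pairs positively with every wall class lying on the same side of $H^\perp$ as $H$ must lie in the K\"ahler chamber of $H$ and is therefore ample; this gives precisely the stated criterion after folding in the ``automatic'' $q(D)\geq 0$ part of case (1). The main obstacle is the second step: ruling out \emph{any other} negative-square class as a source of walls. In Hassett--Tschinkel this is handled by a careful deformation-theoretic analysis, showing that on a sufficiently generic deformation every chamber wall is represented by a class of one of the two listed types, and then by transporting the conclusion back to the original $F$ via monodromy acting on $H^2$.
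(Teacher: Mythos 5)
This statement is quoted verbatim from Hassett--Tschinkel \cite[Theorem 1]{HT}; the paper offers no proof of it and uses it purely as a black box (its only role here is to feed into Lemma \ref{no -10 class} and the ampleness arguments), so there is no in-paper argument to compare yours against. Judged against the actual Hassett--Tschinkel proof, your outline identifies the right architecture: the K\"ahler cone as a chamber of the positive cone in the sense of Huybrechts/Boucksom, the reduction to negative-square wall classes via reverse Cauchy--Schwarz, the $(-2)$-classes coming from divisorial (uniruled) contractions, and the $(-10)$, divisibility-$2$ classes dual to lines in Lagrangian planes $\BP^2\subset F$ (Mukai flops --- these are Mukai's, not Beauville's). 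That is indeed how the sufficiency direction is organized in \cite{HT}.

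The gap is that the one step you yourself flag as ``the main obstacle'' --- showing that \emph{no other} negative class produces a wall --- is precisely where all the content lives, and your sketch replaces it with an appeal to ``a careful deformation-theoretic analysis.'' To close it one needs the classification of birational contractions on projective symplectic fourfolds (Wierzba--Wi\'sniewski, Namikawa: small contractions contract Lagrangian planes, divisorial ones contract $\BP^1$-bundles over surfaces), Boucksom's duality between the moving cone and uniruled divisors, and the explicit computation $q(\ell)=-\tfrac{5}{2}$ for the line class $\ell$ of a Lagrangian plane, whence the dual divisor has square $-10$ and divisibility $2$. Two smaller imprecisions: the exceptional divisor of a divisorial contraction need not itself have square $-2$ (for the Hilbert--Chow contraction $q(E)=-8$; the wall class is $e=E/2$), so ``wall class'' and ``exceptional divisor'' must be distinguished; and the reverse Cauchy--Schwarz step presupposes $C$ lies in the positive cone, which must first be extracted from the hypotheses (e.g.\ by testing $C$ against integral classes of nonnegative square) rather than assumed.
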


\begin{lemma}\label{no -10 class}
For $X$ in Lemma \ref{ample divisors}, there is no divisor $D$ of square $-10$ on $X^{[2]}$.
\end{lemma}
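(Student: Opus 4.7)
The plan is to translate the claim into an elementary Diophantine statement and dispatch it by a two-step congruence argument modulo $8$. First I would use the decomposition \eqref{BBF} together with $(e^2)=-2m+2=-2$ and the intersection matrix \eqref{Picard lattice of Oguiso} to write
\[
NS(X^{[2]})=\Z h_1\oplus\Z h_2\oplus\Z e,
\]
so that a general divisor class $D=ah_1+bh_2+ce$ with $(a,b,c)\in\Z^3$ satisfies
\[
(D^2)=4a^2+4ab-4b^2-2c^2.
\]
The lemma then becomes the assertion that $c^2=2a^2+2ab-2b^2+5$ has no solution in integers.

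Next I would reduce modulo $4$. Since $-2b^2\equiv 2b^2\pmod 4$, the equation implies $c^2\equiv 2(a^2+ab+b^2)+1\pmod 4$. A direct check on the four residues of $(a,b)\pmod 2$ shows that $a^2+ab+b^2$ is even precisely when both $a$ and $b$ are even; in every other case $a^2+ab+b^2$ is odd, forcing $c^2\equiv 3\pmod 4$, which is impossible. So any putative solution must have $a=2a'$ and $b=2b'$ for some $a',b'\in\Z$.

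Substituting into the equation and reducing modulo $8$ then gives $c^2=8(a'^2+a'b'-b'^2)+5$, i.e.\ $c^2\equiv 5\pmod 8$, which contradicts the fact that squares modulo $8$ lie in $\{0,1,4\}$. The argument is a short routine calculation; the only mild subtlety is that a single reduction modulo $4$ does not close the problem, and one is first forced into the even sublattice in order to extract the contradiction at the next step. No geometric input beyond the lattice structure of $NS(X^{[2]})$ is required.
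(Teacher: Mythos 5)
Your argument is correct: it reduces the claim to the same Diophantine equation $c^2=2(a^2+ab-b^2)+5$ that the paper obtains from the decomposition $NS(X^{[2]})=\Z h_1\oplus\Z e\oplus\Z h_2$, and rules it out by congruences modulo $4$ and $8$, which is essentially the paper's own parity argument (the paper first uses that the $e$-coefficient is odd to get $x^2+xy-y^2\equiv 2\pmod 4$ and then checks parities of $x,y$; you force $a,b$ even first and then contradict squares mod $8$). The two computations are interchangeable and equally valid.
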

\begin{proof}
For $D=(x,z,y)$, suppose that $D^2=4(x^2+xy-y^2)-2z^2=-10$. Then since $z$ is odd, let $z=2z'+1$ and then we have $x^2+xy-y^2=2z'(z'+1)-2$, i.e.,
$x^2+xy-y^2\equiv 2 \mod 4.$
Now if $x$ ($y$, resp.) is even, then $y$ ($x$, resp.) is also even, hence a contradiction. So both $x,y$ are odd, but then $x^2+xy-y^2$ is odd, a contradiction.
\end{proof} 

In \cite[Theorem 6.6]{O2}, Oguiso showed that $g=\iota_0\circ \iota_1\circ \iota_2$ without proof. Here we used the same notation $g$ for the induced automorphism on $X^{[2]}$ from $X$ via \eqref{inclusion}. In \cite[Remark 2]{L}, we confirmed this relation.
In \cite[Theorem 2]{L}, the author also found the relations between Beauville involutions and the natural automorphism $g$ as follows. 
\begin{theorem}\cite[Theorem 2]{L}\label{relations} 
For $k=0,1,$ or $2$ and $l\in \Z$,
\begin{equation}
\iota_{3l+k}=g^l\circ \iota_k \circ g^{-l}.
\end{equation}
\end{theorem}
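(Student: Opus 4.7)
The plan is to deduce the conjugation relations from a naturality property of the Beauville involution combined with the explicit action of $g^{*}$ on $\NS(X)$. The strategy has three steps: (i)~verify that $\iota_{n}$ depends only on the class $D_n\in\NS(X)$; (ii)~for any $h\in\Aut(X)$ establish the naturality relation $\iota_{h^{*}D}=(h^{[2]})^{-1}\circ\iota_{D}\circ h^{[2]}$; and (iii)~apply this with a suitable power of $g^{\pm 1}$ together with the computation of $g^{*}D_n$ in $\NS(X)$, and iterate.

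For step (i), recall the intrinsic description of $\iota_n$: given $Z\in X^{[2]}$, the line $\langle\phi_{D_n}(Z)\rangle\subset\BP^{3}$ meets the quartic $\phi_{D_n}(X)$ in a length-$4$ subscheme two of whose points are $\phi_{D_n}(Z)$, and $\iota_n(Z)$ is the pullback via $\phi_{D_n}$ of the residual length-$2$ subscheme. Since a linear change of coordinates on $\BP^{3}$ permutes lines and preserves residual intersection, any two embeddings associated with the complete linear system $|D_n|$ define the same involution, so $\iota_n$ depends only on the class $D_n\in\NS(X)$.

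For step (ii), pullback of sections furnishes an identification $H^{0}(X,\sO_{X}(h^{*}D))\cong H^{0}(X,\sO_{X}(D))$, $s\mapsto s\circ h$. Choosing compatible bases, $\phi_{h^{*}D}=\phi_{D}\circ h$ as morphisms $X\to\BP^{3}$; in particular $\phi_{h^{*}D}(X)=\phi_{D}(X)$ set-theoretically, and $\phi_{h^{*}D}(Z)=\phi_{D}(h(Z))$ for every $Z\in X^{[2]}$. Applying the residual-intersection recipe of (i) to this identity, the line through $\phi_{D}(h(Z))$ meets $\phi_{D}(X)$ in the residual pair $\phi_{D}(\iota_{D}(h(Z)))$, which must agree with $\phi_{h^{*}D}(\iota_{h^{*}D}(Z))=\phi_{D}(h(\iota_{h^{*}D}(Z)))$; comparing gives $\iota_{h^{*}D}(Z)=h^{-1}(\iota_{D}(h(Z)))$, which is the claimed naturality.

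For step (iii), by Oguiso's construction $g^{*}$ acts on $\NS(X)$ as multiplication by $\eta^{\pm 6}$, whence $g^{*}D_{n}=D_{n\pm 3}$; substituting into the naturality relation with $h=g^{\mp 1}$ yields $\iota_{n+3}=g\circ\iota_{n}\circ g^{-1}$, and iterating $l$ times (equivalently, by induction on $|l|$) produces $\iota_{3l+k}=g^{l}\circ\iota_{k}\circ g^{-l}$ for all $l\in\Z$ and $k\in\{0,1,2\}$. The principal subtlety lies in step (ii): one must check carefully that the pullback of sections is compatible with the set-theoretic identity $\phi_{h^{*}D}(X)=\phi_{D}(X)$ so that the residual-pair construction genuinely intertwines $\iota_{D}$ with conjugation by $h^{[2]}$. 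Once this naturality is established, step (iii) is an immediate consequence of the $\NS$-action of $g^{*}$.
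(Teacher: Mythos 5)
First, a remark on the comparison you were asked for: this paper does not prove the statement at all --- it is imported verbatim from \cite[Theorem 2]{L} --- so there is no in-paper argument to measure yours against, and your proposal has to stand on its own. On its own terms, steps (i) and (ii) are correct and are surely the natural route: the Beauville involution depends only on the complete linear system $|D_n|$, and the identity $\phi_{h^*D}=\phi_D\circ h$ (after the canonical identification of the two target $\BP^3$'s, which is harmless by step (i)) gives $\iota_{h^*D}=h^{-1}\circ\iota_D\circ h$ on the dense open locus where the secant line meets $X$ in four distinct points, hence everywhere since both sides are morphisms. That part is fine.

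The genuine problem is in step (iii), where the $\pm/\mp$ hedge conceals the fact that the two sign choices produce \emph{different}, and in a free product inequivalent, formulas. If $g^*D_n=D_{n+3}$, then taking $h=g$ in your naturality relation gives $\iota_{n+3}=g^{-1}\circ\iota_n\circ g$ and hence $\iota_{3l+k}=g^{-l}\circ\iota_k\circ g^{l}$; only the opposite case $g^*D_n=D_{n-3}$ yields the stated $\iota_{3l+k}=g^{l}\circ\iota_k\circ g^{-l}$. Worse, the data in this paper points to the first case: with $g=\iota_0\circ\iota_1\circ\iota_2$ and the matrices in \eqref{iotas}, the product $\iota_2^*\iota_1^*\iota_0^*$ restricted to $\NS(X)$ is $\left[\begin{smallmatrix}5&8\\8&13\end{smallmatrix}\right]$, i.e.\ multiplication by $\eta^6=5+8\eta$, so under the usual contravariance of pullback $g^*=\cdot\,\eta^{6}$ and $g^*D_n=D_{n+3}$, and your recipe then outputs $\iota_{3l+k}=g^{-l}\circ\iota_k\circ g^{l}$ rather than the asserted orientation. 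In short, your argument proves the conjugacy relation only up to replacing $g$ by $g^{-1}$ (equivalently $l$ by $-l$). This is a bookkeeping rather than a conceptual failure, but to actually obtain the stated formula you must pin down, in the conventions of \cite{L}, whether $g^*$ acts on $\NS(X)$ as multiplication by $\eta^{6}$ or $\eta^{-6}$ and whether the composition order in $g=\iota_0\circ\iota_1\circ\iota_2$ is ``apply $\iota_2$ first'' or ``apply $\iota_0$ first''; as written the proof does not determine the direction of conjugation, and asserting both sign choices lead to the same conclusion is incorrect.
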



Boissi$\grave{\text{e}}$re and Sarti proved that if $X$ is a K3 surface, then an automorphism $f\in \Aut(X^{[n]})$ is natural if and only if it preserves the diagonal (cf. \cite{BS}). 



\subsection{Hyperbolic geometry}
Recall that the $n$-dimensional hyperbolic space, usually denoted by $\BH^{n}$, is the unique simply connected, $n$-dimensional complete Riemannian manifold with a constant negative sectional curvature equal to $-1$. One of the models of hyperbolic plane is the upper half-plane
\begin{equation}
\BH^2=\{(x,y)\in \R^2\mid y>0\}.
\end{equation}
Recall that the group of all orientation-preserving isometries of $\BH^2$ is
\begin{equation}\label{Mobius transformations}
\text{M}\ddot{\text{o}}\text{b}^+(\BH^2):=\{g(z)=\frac{az+b}{cz+d}\mid a,b,c,d\in \R, ad-bc=1\},
\end{equation}
where $z=x+\sqrt{-1}y$, a complex number. Note that such an isometry corresponds to a matrix 
\begin{equation}
\begin{bmatrix} a& b\\c&d \end{bmatrix}\in \SL(2,\R).
\end{equation}
Moreover, since two matrices in $\SL(2,\R)$ induce the same isometry if and only if their matrices differ by a factor of $\pm 1$, hence $\text{M}\ddot{\text{o}}\text{b}^+(\BH^2)\cong \PSL(2,\R)$.

Any orientation-preserving isometry of $\BH^2$ is one of the following three types. 
\begin{definition}
Let $g(\neq \id)$ be any M$\ddot{\text{o}}$bius transformation in \eqref{Mobius transformations}. Then
\begin{enumerate}
\item $g$ is \textit{elliptic} if and only if $g$ has a unique fixed point in $\BH^2$;
\item $g$ is \textit{hyperbolic} if and only if $g$ has two fixed points on $\partial\BH^2$;
\item $g$ is \textit{parabolic} if and only if $g$ has a unique fixed point on $\partial\BH^2$,
\end{enumerate}
where $\partial \BH^2$ means the set of boundary points of $\BH^2$.
\end{definition}

Two M$\ddot{\text{o}}$bius transformations $f,g$ are \textit{conjugate} if there is a M$\ddot{\text{o}}$bius transformation $h$ such that $g=hfh^{-1}$. Any M$\ddot{\text{o}}$bius transformation is conjugate to one of the above three types. Indeed, a parabolic isometry is conjugate to $g(z)=z+s$, $s\in \R$. An elliptic one is conjugate to $g(z)=\frac{s}{-1/s z}=-s^2/z$, $s\in \R\setminus {0}$. A hyperbolic one is conjugate to $g(z)=\frac{sz}{1/s}=s^2 z$, $s\in \R\setminus {0}$. Moreover, the types of hyperbolic isometries are characterized by the trace.
\begin{theorem}\cite[Theorem 4.3.4]{Be} \label{classification}
Let $g(\neq \id)$ be any M$\ddot{\text{o}}$bius transformation in \eqref{Mobius transformations} with $\trace(g):=a+d$. Then 
\begin{enumerate}
\item $g$ is parabolic if and only if $\trace^2(g)=4$;
\item $g$ is elliptic if and only if $\trace^2(g)\in [0,4)$;
\item $g$ is hyperbolic if and only if $\trace^2(g)\in (4,+\infty)$.
\end{enumerate}
\end{theorem}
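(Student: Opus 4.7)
The plan is to analyse the fixed-point equation $g(z) = z$ directly and to read off the type of $g$ from the location of its roots in $\widehat{\C} = \C \cup \{\infty\}$, relating that location to $\trace^2(g)$ through the discriminant of a quadratic.

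First I would treat the generic case $c \neq 0$. Then $\infty$ is not fixed, and $g(z)=z$ becomes the quadratic $cz^2 + (d-a)z - b = 0$. Its discriminant is
\begin{equation*}
(d-a)^2 + 4bc = (a+d)^2 - 4(ad - bc) = \trace^2(g) - 4,
\end{equation*}
where the last equality uses $ad - bc = 1$. This single identity is the heart of the argument. Since $a,b,c,d \in \R$, the discriminant is real, and its sign dictates the geometry: a positive value yields two distinct real roots, both on $\partial \BH^2$, so $g$ is hyperbolic; a zero value yields a single (doubled) real root, so $g$ is parabolic; a negative value yields a pair of complex conjugate roots, of which exactly one lies in $\BH^2$, so $g$ is elliptic.

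Next I would dispatch the degenerate case $c = 0$. Here $ad = 1$ and $g(z) = a^2 z + ab$, and $\infty \in \partial \BH^2$ is fixed. If $a^2 = 1$ then $g$ is a nontrivial real translation fixing only $\infty$, hence parabolic, and $\trace(g) = a + 1/a = \pm 2$, giving $\trace^2(g) = 4$. If $a^2 \neq 1$ then $g$ also fixes the real number $ab/(1 - a^2)$, giving two boundary fixed points, hence $g$ is hyperbolic, while by the AM--GM inequality $\trace^2(g) = (a + 1/a)^2 > 4$. Combining the two cases yields the three equivalences.

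I expect no substantive obstacle: the proof amounts to computing one discriminant and doing a short case split. The only mild subtlety is verifying that in the elliptic case exactly one of the two complex conjugate roots actually lies in the upper half-plane, but this is automatic because $c$ is real, so the two roots are genuine complex conjugates of each other and one has positive imaginary part irrespective of the sign of $c$.
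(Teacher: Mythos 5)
Your proof is correct and complete. Note that the paper itself gives no argument for this statement: it is quoted verbatim from Beardon \cite[Theorem 4.3.4]{Be}, so there is no internal proof to compare against. Your route --- reducing the fixed-point equation to the quadratic $cz^2+(d-a)z-b=0$ and observing that its discriminant equals $(a+d)^2-4(ad-bc)=\trace^2(g)-4$ --- is the standard elementary derivation, and it is slightly more direct than Beardon's own treatment, which first conjugates $g$ to the normal forms $z\mapsto z+s$, $z\mapsto s^2z$, $z\mapsto -s^2/z$ (the forms recalled just above the theorem in this paper) and then uses conjugation-invariance of $\trace^2$. The two small points that need care are both handled: in the elliptic case the two roots are genuine non-real complex conjugates (since $c,a,b,d\in\R$), so exactly one lies in $\BH^2$ and neither lies on $\partial\BH^2=\R\cup\{\infty\}$, while $\infty$ is not fixed because $c\neq 0$; and in the degenerate case $c=0$ the hypothesis $g\neq\id$ guarantees $ab\neq 0$ when $a^2=1$, so the translation is indeed nontrivial and parabolic. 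The only cosmetic remark is that item (2) of the statement writes $\trace^2(g)\in[0,4)$ rather than simply $\trace^2(g)<4$ because $\trace^2(g)\geq 0$ automatically for real matrices, which your argument delivers for free.
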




Let $f,g,h$ be elliptic isometries of order two with distinct fixed points $u,v,w$ respectively. We assume that $u,v$ and $w$ are not collinear. For the hyperbolic triangle with vertices $u,v$ and $w$, let $\alpha,\beta$ and $\gamma$ be the angles and let $a,b$ and $c$ be the lengths of the opposite sides respectively. It is known as the Sine rule for hyperbolic triangles that 
\begin{equation}
\frac{\sinh a}{\sin \alpha}=\frac{\sinh b}{\sin \beta}=\frac{\sinh c}{\sin \gamma}.
\end{equation}
Let $\lambda=\sinh a \sinh b \sin \gamma(=\sinh b \sinh c \sin \alpha=\sinh c \sinh a \sin \beta$). 
\begin{theorem}\cite[Theorem 11.5.1]{Be}\label{trace of any three elliptic}
The absolute value of the trace of any of the isometires
\begin{equation*}
fgh, hfg,ghf, hgf, fhg,gfh
\end{equation*}
is equal to $2\lambda$.
\end{theorem}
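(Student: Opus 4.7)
The plan is to first use symmetries in $\SL(2,\R)$ to reduce the six absolute traces to a single one, and then evaluate that trace by a direct matrix computation in a normalised upper half-plane model.

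\textbf{Reduction to one product.} The six words split into two cyclic orbits $\{fgh,ghf,hfg\}$ and $\{hgf,gfh,fhg\}$. Cyclic rotation within an orbit is a conjugation (for instance $ghf = f^{-1}(fgh)f$), so words in the same orbit have equal traces in $\SL(2,\R)$. To connect the two orbits I would use that each half-turn, viewed in $\SL(2,\R)$, satisfies $f^{2} = -I$, hence $f^{-1} = -f$; therefore $(fgh)^{-1} = (-h)(-g)(-f) = -hgf$, and since $\trace(M) = \trace(M^{-1})$ for every $M \in \SL(2,\R)$, this yields $|\trace(fgh)| = |\trace(hgf)|$. Hence it suffices to evaluate $|\trace(fgh)|$.

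\textbf{Normalisation.} Conjugating in $\PSL(2,\R)$, I place $u = i$ and $v = ie^{c}$ on the imaginary axis, so the half-turns about $u$ and $v$ have $\SL(2,\R)$ representatives
\begin{equation*}
F = \begin{pmatrix} 0 & -1 \\ 1 & 0 \end{pmatrix}, \qquad G = \begin{pmatrix} 0 & -e^{c} \\ e^{-c} & 0 \end{pmatrix},
\end{equation*}
and $FG = \diag(-e^{-c},-e^{c})$ is diagonal. The third vertex $w$ is reached from $i$ by rotating the vertical direction through angle $\alpha$ and travelling hyperbolic distance $b$; applying the rotation $R_{\alpha} \in \SL(2,\R)$ about $i$ to $ie^{b}$ and simplifying via half-angle identities gives
\begin{equation*}
w = p + qi = \frac{\sinh b \sin\alpha + i}{\cosh b - \sinh b \cos\alpha},
\end{equation*}
so in particular $p/q = \sinh b \sin\alpha$. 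A general half-turn about a point $p+qi$ is represented by $H = q^{-1}\bigl(\begin{smallmatrix} p & -(p^{2}+q^{2}) \\ 1 & -p \end{smallmatrix}\bigr)$, with $H_{11} = p/q$ and $H_{22} = -p/q$.

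\textbf{Conclusion and obstacle.} Since $FG$ is diagonal, the diagonal entries of $FGH$ are $-e^{-c}H_{11}$ and $-e^{c}H_{22}$, hence
\begin{equation*}
\trace(FGH) \;=\; \tfrac{p}{q}\bigl(e^{c} - e^{-c}\bigr) \;=\; 2 \sinh b \sinh c \sin\alpha \;=\; 2\lambda,
\end{equation*}
the three symmetric forms of $\lambda$ coinciding by the Sine rule. The only genuinely non-routine step is deriving the closed-form expression for $w$; once the clean ratio $p/q = \sinh b \sin\alpha$ is visible, the trace collapses immediately because $FG$ was chosen diagonal. A more abstract route using Fricke-type trace identities in $\SL(2,\R)$ together with the hyperbolic cosine rule is also available, but would obscure the geometric origin of the $\sin\alpha$ factor.
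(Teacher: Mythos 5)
Your proof is correct and complete. Note first that the paper itself gives no argument for this statement: it is quoted verbatim from Beardon \cite[Theorem 11.5.1]{Be}, so there is no internal proof to compare against, and your write-up actually supplies something the paper leaves to the reference. Checking the details: the reduction is sound, since each half-turn lifts to a trace-zero matrix $F\in\SL(2,\R)$ with $F^2=-I$, whence $(fgh)^{-1}=-hgf$ and $\trace(M)=\trace(M^{-1})$ gives $|\trace(fgh)|=|\trace(hgf)|$, while cyclic permutations are conjugations; the normalisation puts $u=i$, $v=ie^{c}$ (hyperbolic distance $c=|uv|$, as required by the paper's labelling of opposite sides), and your formula for $w$ checks out --- a direct computation gives $R_{\theta}(ie^{b})=\bigl(-\sinh b\sin\theta+i\bigr)/\bigl(\cosh b-\sinh b\cos\theta\bigr)$, so $p/q=\pm\sinh b\sin\alpha$ with the sign depending only on the orientation of the rotation, which is immaterial for the absolute value of the trace; and since $FG=\diag(-e^{-c},-e^{c})$, the trace of $FGH$ is $(p/q)(e^{c}-e^{-c})=\pm 2\sinh b\sinh c\sin\alpha=\pm 2\lambda$, the three symmetric expressions for $\lambda$ agreeing by the Sine rule as you say. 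Two minor points you could make explicit: the ambiguity of lift ($\pm F$, etc.) only changes the trace by a sign, so the absolute value is well defined independently of the choices; and $\cosh b-\sinh b\cos\alpha>0$ guarantees $q>0$, so $w$ genuinely lies in $\BH^2$. Neither affects the validity of the argument.
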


\begin{definition}\cite[\S 5.1. and \S 9.4.]{Be}\cite[\S 3.2.]{K}
$G\subset \text{M}\ddot{\text{o}}\text{b}^+(\BH^2)$ is \textit{elementary} if there is a finite $G$-orbit in $\BH^2\cup\partial \BH^2$.
A discrete subgroup of $\PSL(2,\R)$ is called \textit{Fuchsian group}. For any Fuchsian group $G$, the \textit{Dirichlet polygon} $D(p)$ with \textit{center} $p$ is the set
\begin{equation}
D(p)=\{z\in \BH^2\mid d(z,p)<d(z,g(p)) \text{ for all } g\in G\setminus \{\id\}\},
\end{equation}
where $d$ is the hyperbolic metric on $\BH^2$ and $p$ is any point in $\BH^2$ that is not fixed by any elliptic element of $G$.
\end{definition}

Let $G$ be a finitely generated non-elementary Fuchsian group. Any Dirichlet polygon $D(p)$ for $G$ is finite sided and topologically, $(D\cup \partial D)/G$ is a compact surface $S$ of some genus, say $g$, with a certain number of holes removed. It is independent of the choice of $p$, provided that $p$ is not an elliptic fixed-point of $G$. The genus $g$ does not depend on the choice of $D$. Let $r,s,t$ be the number of conjugacy classes of maximal elliptic cyclic subgroups with orders $m_1,\cdots, m_r$, the number of conjugacy classes of maximal parabolic cyclic subgroups, and the number of conjugacy classes of maximal hyperbolic boundary cyclic subgroups, respectively.



\begin{definition}\cite[Definition 10.4.1]{Be}\label{signature}
The symbol
\begin{equation*}
(g:m_1,\cdots,m_r;s;t)
\end{equation*}
\end{definition}
is called the \textit{signature} of $G$: each parameter is a non-negative integer and $m_j\geq 2$.

It is known that every Fuchsian group has a presentation of the following form (cf.\cite{HV},\cite{LS}).
\begin{equation}\label{group presentation}
\begin{array}{lcl}
\text{Generators}: & a_1,b_1,\cdots,a_g,b_g & \text{(Hyperbolic)}\\
&x_1,x_2,\cdots,x_r & \text{(Elliptic)}\\
&p_1,\cdots,p_s & \text{(Parabolic)}\\
&h_1,\cdots,h_t& \text{(Hyperbolic boundary elements)}\\
\end{array}
\end{equation}
\begin{equation}\label{relations}
\text{Relations}: x_1^{m_1}=x_2^{m_2}=\cdots=x_r^{m_r}=\prod_{i=1}^ga_ib_ia_i^{-1}b_i^{-1}\prod_{j=1}^rx_j\prod_{k=1}^sp_k\prod_{l=1}^th_l=1.
\end{equation}

If $s+t>0$, the Fuchsian group is a free product of cyclic groups (cf. \cite{HV},\cite[\S 3]{LS}):
\begin{equation}\label{free product of Fuchsian groups}
G\cong \Z_{m_1}\ast \cdots \ast \Z_{m_r}\ast F_v,
\end{equation}
where $F_v$ is a free group of rank $v=2g+s+t-1$. 

\begin{theorem}\cite[Theorem 11.5.2]{Be}\label{signature}
Let $f,g$ and $h$ be elliptic involutions which generate a non-elementary group $G$ and let $\lambda$ be given by Theorem \ref{trace of any three elliptic}.
\begin{enumerate}
\item If $\lambda>1$ then $G$ is discrete and has signature $(0:2,2,2;0;1)$.
\item If $\lambda=1$ then $G$ is discrete and has signature $(0:2,2,2;1;0)$.
\item If $\lambda<1$ then $G$ is discrete only if $\lambda$ is one of the values
\begin{equation*}
\cos(\pi/q), q\geq 3; \hspace{2mm} \cos(2\pi/q), q\geq 5; \hspace{2mm} \cos(3\pi/q), q\geq 7:
\end{equation*}
the possible signatures for $G$ are
\begin{equation*}
(0:2,2,2,q;0;0),(0:2,3,q;0;0),(0:2,4,q;0;0).
\end{equation*}
\end{enumerate}
\end{theorem}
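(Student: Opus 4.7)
The plan is to apply Poincar\'e's polygon theorem to the group $G = \langle f, g, h \rangle$, using the quantity $\lambda$ to control the type of the product $fgh$ and thereby the boundary geometry of a fundamental polygon. By Theorem \ref{trace of any three elliptic} one has $|\trace(fgh)| = 2\lambda$, and by Theorem \ref{classification} the element $fgh$ is hyperbolic when $\lambda>1$, parabolic when $\lambda=1$, and elliptic when $\lambda<1$. The three cases of the statement therefore correspond to the three types of the distinguished element $fgh$, whose fixed-point behavior dictates the nature of an extra vertex in a fundamental polygon for $G$.

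To build such a polygon, I would start with the geodesic triangle $T$ with vertices $u,v,w$ and form a quadrilateral $P = T \cup f(T)$ by reflecting $T$ across the midpoint of the side opposite $u$. Each involution $f, g, h$ then acts as a side-pairing of $P$ by a half-turn centered at the midpoint of a paired side. Poincar\'e's theorem thus produces the relations $f^2 = g^2 = h^2 = 1$ together with a single cycle relation coming from the remaining vertex of $P$, which is precisely the configuration fixed by $fgh$. In case $\lambda > 1$ this vertex lies outside $\BH^2 \cup \partial \BH^2$, so the cycle degenerates into a free boundary side along the axis of the hyperbolic $fgh$, yielding signature $(0:2,2,2;0;1)$. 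In case $\lambda = 1$ the vertex lies on $\partial\BH^2$, producing a cusp and signature $(0:2,2,2;1;0)$.

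In case $\lambda < 1$ the element $fgh$ is elliptic, and discreteness of $G$ forces its rotation angle to be of the form $2\pi k/q$ for coprime integers $1 \leq k < q$, so that $\lambda = |\cos(k\pi/q)|$. Imposing $(fgh)^q = 1$ on the three-involution presentation produces generically a $(2,2,2,q)$-group with $\lambda = \cos(\pi/q)$ and $q \geq 3$. For larger values of $k$ the fundamental polygon folds onto itself in a way that identifies two of the involutive generators, collapsing the signature to $(0:2,3,q;0;0)$ (with $\lambda = \cos(2\pi/q)$, $q \geq 5$) or $(0:2,4,q;0;0)$ (with $\lambda = \cos(3\pi/q)$, $q \geq 7$). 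The non-elementarity hypothesis on $G$ rules out the remaining degenerate configurations.

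The main obstacle is case (3): precisely identifying when the three-involution presentation of $G$ coincides with a triangle-type Fuchsian group and ruling out all other signatures. This requires the Poincar\'e area formula for finitely generated Fuchsian groups together with the general presentation \eqref{group presentation} under the relations \eqref{relations}; in particular, excluding signatures such as $(0:2,5,q;0;0)$ amounts to a careful generator-count showing that at most the three listed families of elliptic-cyclic signatures can arise from a non-elementary group generated by exactly three involutions with non-collinear fixed points.
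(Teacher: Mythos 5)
This statement is quoted verbatim from \cite[Theorem 11.5.2]{Be}; the paper supplies no proof of it (and in the sequel only part (1) is actually used), so there is no internal argument to compare yours against --- your proposal has to stand on its own, and as written it has two genuine gaps. First, the polygon is set up incorrectly. The involution $f$ is the half-turn about the \emph{vertex} $u$, not about the midpoint of the side opposite $u$; the half-turn about that midpoint need not lie in $G$ at all, and $T\cup f(T)$ is two triangles meeting only at the point $u$, not a quadrilateral carrying the side-pairings $f,g,h$. The domain you want is a quadrilateral having $u,v,w$ as the midpoints of three of its sides, e.g.\ with vertices $P_0$, $h(P_0)$, $gh(P_0)$, $fgh(P_0)$, the fourth side being governed by the element $fgh$. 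With that correction, the identity $\vert\trace(fgh)\vert=2\lambda$ from Theorem \ref{trace of any three elliptic} together with Theorem \ref{classification} does sort the fourth side into the hyperbolic/parabolic/elliptic trichotomy, and Poincar\'e's theorem can plausibly deliver parts (1) and (2).

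Second, and more seriously, part (3) is a \emph{necessary} condition for discreteness (``$G$ is discrete \emph{only if} $\lambda$ is one of the listed values''), and Poincar\'e's polygon theorem runs in the wrong direction for that: it certifies discreteness once a good polygon is exhibited, but it cannot exclude discreteness for other values of $\lambda$. Your step ``discreteness forces the rotation angle of $fgh$ to be $2\pi k/q$'' is only the beginning; the substance of (3) is showing that among discrete groups containing three half-turns with elliptic product only the three listed families of $\lambda$ and signatures occur, and Beardon does this through his classification of two-generator discrete groups (applied to products such as $fg$ and $gh$) and trace inequalities, not through a fundamental polygon. The proposed mechanism that for larger $k$ ``the polygon folds onto itself, identifying two of the involutive generators'' is also not what happens: in the $(0{:}2,3,q;0;0)$ and $(0{:}2,4,q;0;0)$ cases the three involutions remain distinct, and the point is rather that $G$ is a larger triangle group in which $f,g,h$ satisfy relations beyond $f^2=g^2=h^2=(fgh)^q=\id$. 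So parts (1)--(2) are salvageable after fixing the polygon, but part (3) is not established by this approach.
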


For two elliptic isometries, we have the following theorem.
\begin{theorem}\cite[\S 5]{LU}
Let $A$ and $B$ be the elliptic generators of finite orders of $\mathfrak{A}$ and $\mathfrak{B}$, respectively. If $AB\neq \id$ and $AB$ has real fixed points, then the group $G$ generated by $A$ and $B$ is the free product of $\mathfrak{A}$ and $\mathfrak{B}$.
\end{theorem}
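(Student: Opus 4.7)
The plan is to invoke the structure theory of Fuchsian groups reviewed in Section~2.2, specifically the free-product decomposition \eqref{free product of Fuchsian groups}, after establishing that $G=\langle A,B\rangle$ is a discrete non-elementary Fuchsian group of the right signature. The signature should turn out to be $(0:|\mathfrak{A}|,|\mathfrak{B}|;1;0)$ when $AB$ is parabolic, and $(0:|\mathfrak{A}|,|\mathfrak{B}|;0;1)$ when $AB$ is hyperbolic.

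First I would rule out degeneracies. Let $p_A,p_B\in\BH^2$ be the fixed points of $A$ and $B$. If $p_A=p_B$, then $A$ and $B$ would commute as rotations about the same interior point, making $AB$ elliptic; this contradicts the assumption that $AB$ has a real fixed point. Hence $p_A\neq p_B$, and the presence of a parabolic or hyperbolic element $AB$ together with two distinct interior elliptic fixed points prevents $G$ from fixing any finite subset of $\BH^2\cup\partial\BH^2$, so $G$ is non-elementary.

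Next I would construct a Poincar\'e polygon $P\subset\BH^2$ realising $G$: a polygon with one vertex at $p_A$ subtending angle $2\pi/|\mathfrak{A}|$, a vertex at $p_B$ subtending angle $2\pi/|\mathfrak{B}|$, and either an ideal vertex at a fixed point of $AB$ in $\partial\BH^2$ (if $AB$ is parabolic) or two ``free'' sides perpendicular to the axis of $AB$ (if $AB$ is hyperbolic). The side-pairings are $A$ and $B$ themselves. By Poincar\'e's polygon theorem, $G$ is then discrete with fundamental domain $P$ and the presentation read off from the vertex-cycle conditions is
\[
G = \langle\, A,B \mid A^{|\mathfrak{A}|} = B^{|\mathfrak{B}|} = 1\,\rangle \;\cong\; \mathfrak{A}\ast\mathfrak{B}.
\]
Reading off the signature from $P$ and applying \eqref{free product of Fuchsian groups} with $g=0$, $r=2$ and $s+t=1$ gives $v=2g+s+t-1=0$, and hence $G\cong \Z_{|\mathfrak{A}|}\ast \Z_{|\mathfrak{B}|}=\mathfrak{A}\ast\mathfrak{B}$ directly.

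The main obstacle I expect is verifying the Poincar\'e cycle conditions — in particular, confirming that the only elliptic vertex cycles in $P$ are those at $p_A$ and $p_B$, so that no relation beyond $A^{|\mathfrak{A}|}=B^{|\mathfrak{B}|}=1$ is imposed. The hypothesis that $AB$ has a real fixed point is exactly what rules out an accidental interior cycle at a fixed point of $AB\in G$, which would otherwise force a relation of the form $(AB)^k=1$ and destroy the free-product structure. Equivalently, if one preferred a direct argument avoiding Poincar\'e's theorem, one could set up a ping-pong between $\mathfrak{A}$-orbits and $\mathfrak{B}$-orbits of suitable arcs on $\partial\BH^2$ anchored at the real fixed point(s) of $AB$; the verification of the Klein criterion for purely elliptic factors — which individually have no attracting/repelling dynamics on the boundary — is precisely where the real fixed-point hypothesis on $AB$ provides the separating data needed to make the argument go through.
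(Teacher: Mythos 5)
The paper contains no proof of this theorem: it is imported wholesale from \cite[\S 5]{LU} and used only through the order-two case recorded in Example \ref{two elliptic}. So there is no internal argument to measure yours against; your Poincar\'e-polygon (or ping-pong) route is the standard geometric alternative to Lyndon--Ullman's elementary analysis of explicit $2\times 2$ matrices, and the signature bookkeeping at the end ($g=0$, $r=2$, $s+t=1$, hence $v=0$ in \eqref{free product of Fuchsian groups}) is fine.

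There is, however, a genuine gap, and it sits exactly where you flagged ``the main obstacle.'' Your polygon has vertex angles $2\pi/|\mathfrak{A}|$ at $p_A$ and $2\pi/|\mathfrak{B}|$ at $p_B$ with side pairings ``$A$ and $B$ themselves''; this only makes sense if $A$ and $B$ are the rotations by exactly $2\pi/|\mathfrak{A}|$ and $2\pi/|\mathfrak{B}|$, consistently oriented. The hypothesis, though, concerns $AB$ for an arbitrary pair of generators, and non-ellipticity of $AB$ does \emph{not} imply non-ellipticity of the product of the primitively oriented rotations --- which is what actually controls whether the remaining vertex cycle of your polygon escapes to $\partial\BH^2$. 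Concretely, take the $(3,3,4)$ triangle group generated by rotations $x,y$ of order $3$ with $xy$ of order $4$; in the standard normalization $\trace(x)=\trace(y)=1$ and $\trace(xy)=-\sqrt{2}$, so $\trace(xy^{-1})=\trace(x)\trace(y)-\trace(xy)=1+\sqrt{2}>2$. Thus $A=x$ and $B=y^{-1}$ are finite-order elliptic generators of two cyclic groups of order $3$ whose product is hyperbolic, yet $\langle A,B\rangle=\langle x,y\rangle$ is the cocompact triangle group, not $\Z_3\ast\Z_3$. So either the statement must be read with $A,B$ the primitive, consistently oriented rotations (which is what your polygon silently assumes and what the source presumably intends), or the argument cannot close up as described. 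None of this affects the only case the paper needs --- order-two elliptics, where every nontrivial element is the primitive rotation and orientation is irrelevant --- though even there one should note that if $p_A,p_B$ lie on a geodesic preserved by $G$ (always the case for two involutions), the group is elementary in the sense of the paper's definition, so the non-elementarity step and the appeal to Theorem \ref{signature} need the infinite dihedral case handled separately (e.g.\ by ping-pong on that geodesic).
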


\begin{example}\label{two elliptic}
Let $f$ and $g$ are elliptic isometries of order $2$ without common fixed points. If $fg$ is hyperbolic, then the group $G=\langle f,g \rangle\cong \Z_2\ast \Z_2$.
\end{example}

\section{Proof}
\subsection{Proof of Theorem \ref{main theorem1}}
First of all, we will show that  $\Aut(X^{[2]})\cong\langle \iota_0,\iota_1,\iota_2\rangle$.

Recall that the natural automorphisms leave invariant the half-exceptional divisor $e$. Whereas for the non-natural automorphisms, we have the following lemma:
\begin{lemma}\label{non-natural}
Let $f\in \Aut(X^{[2]})$ be a non-natural automorphism of $X^{[2]}$. Then $f^*(e)=xh_1+\lambda e+yh_2$ with $xh_1+yh_2\in \Amp(X)$ and $\lambda<0$.
\end{lemma}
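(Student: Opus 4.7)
My plan is to combine three ingredients: the isometry property of $f^*$ (giving a Diophantine constraint on $\lambda$), the pseudo-effectivity of $f^*(E)=2f^*(e)$ (giving the sign of $v:=xh_1+yh_2$), and the geometry of a fiber of the Hilbert--Chow morphism $\epsilon$ (giving the sign of $\lambda$).

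First I would write $f^*(e)=xh_1+yh_2+\lambda e$ in the basis supplied by \eqref{BBF} and expand $(f^*(e))^2=-2$ using the intersection matrix \eqref{Picard lattice of Oguiso}. This reduces to $2(x^2+xy-y^2)=\lambda^2-1$, so $\lambda$ is odd and $v=0$ iff $\lambda=\pm 1$ (as $NS(X)$ has no non-zero isotropic vector: the discriminant $1+4=5$ is not a rational square). The borderline cases are then ruled out by non-naturality: by the Boissi\`ere--Sarti criterion recalled at the end of Section~2.1, $f^*(e)=e$ means $f$ is natural; and $f^*(e)=-e$ would force $-E$ to be the class of the effective divisor $f^*(E)$, contradicting $H\cdot(-E)<0$ for any ample $H$. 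Hence $v\neq 0$, $|\lambda|\geq 3$, and $v^2=2\lambda^2-2\geq 16$.

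Next I would verify $v\in\Amp(X)$. Since $L[2]$ is nef on $X^{[2]}$ for any ample $L$ on $X$ and $f^*(E)$ is effective, the cube of a nef class pairs non-negatively with an effective divisor on the $4$-fold, giving $L[2]^3\cdot f^*(E)\geq 0$. A direct Fujiki-formula computation (polarizing $\int D^4=3q(D)^2$) yields $L[2]^3\cdot f^*(E)=6L^2(L\cdot v)$, so $L\cdot v\geq 0$. Combined with $v^2>0$ and the Hodge index theorem on $NS(X)$ (of signature $(1,1)$, which forbids a two-dimensional positive-definite subspace), this forces $L\cdot v>0$; by \eqref{ample cone of X} we conclude $v\in\Amp(X)$.

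The crux is the sign of $\lambda$, and this is the step I expect to be the main obstacle; my strategy is to pair $f^*(e)$ against a fiber class $F\cong\BP^1$ of $\epsilon$. Standard $A_1$-resolution considerations give $E\cdot F=-2$, hence $e\cdot F=-1$; moreover $v\cdot F=0$ since $v$ lies in $i(H^2(X,\Z))$ and $F$ is contracted by $\epsilon$. Therefore $f^*(e)\cdot F=-\lambda$. On the other hand, by the projection formula $f^*(e)\cdot F=e\cdot f(F)$. Since $f$ is non-natural, $E$ and $f^{-1}(E)$ are distinct irreducible divisors, so $E\cap f^{-1}(E)$ has codimension at least two in $X^{[2]}$; the $\epsilon$-fibers contained in $f^{-1}(E)$ therefore form a proper subfamily of the 2-parameter family of all such fibers, and for a generic $F$ we have $f(F)\not\subset E$. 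For such $F$, the intersection $E\cdot f(F)\geq 0$ as a proper intersection of an effective divisor with an irreducible curve, hence $e\cdot f(F)\geq 0$. Combining, $\lambda\leq 0$, and with $\lambda$ odd and $|\lambda|\geq 3$ we conclude $\lambda\leq -3<0$. The delicate point is the ``generic fiber'' step, which uses non-naturality essentially through the codimension bound on $E\cap f^{-1}(E)$.
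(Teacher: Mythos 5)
Your proof is correct, and its skeleton (the norm computation $(f^*(e))^2=-2$, a positivity argument locating $v=xh_1+yh_2$, a positivity argument for the sign of $\lambda$) matches the paper's, but the two positivity inputs are genuinely different. For $\lambda\le 0$ the paper simply computes $q(f^*(e),e)=-2\lambda$ and invokes the fact that two distinct irreducible effective divisors ($f^*(E)\neq E$ because $f$ is non-natural) have non-negative Beauville--Bogomolov pairing; your generic-fiber/projection-formula argument is a self-contained geometric substitute for that fact --- indeed the class of a fiber of $\epsilon|_E$ is $q$-dual to a negative multiple of $e$, so the two computations are dual to one another, and your codimension-two argument for choosing a fiber with $f(F)\not\subset E$ is sound. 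For $v\in\Amp(X)$ the paper pairs $f^*(e)$ against the ample classes $L[2]-e$ for $L=h_1$ and $L=h_1+h_2$ (Lemma \ref{ample divisors}), obtaining $2x+y+\lambda>0$ and $3x-y+\lambda>0$ and hence $x>0$; your route via $L[2]^3\cdot f^*(E)\ge 0$, the polarized Fujiki relation and the Hodge index theorem needs only the nefness of $L[2]$ rather than the ampleness statement of Lemma \ref{ample divisors}, at the cost of an extra computation. You are also slightly more careful than the paper on one point: the paper asserts $(x,y)^2=2(\lambda^2-1)>0$ without explicitly excluding $\lambda=-1$ (which would force $f^*(e)=-e$, contradicting effectivity of $f^*(E)$), whereas you rule out both borderline cases $\lambda=\pm 1$ explicitly, using the Boissi\`ere--Sarti criterion for $\lambda=1$ exactly as the paper implicitly does in asserting $f^*(E)\neq E$.
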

\begin{proof} 
Let $f^*(e)=(x,\lambda,y)$ with respect to the basis $\{h_1,e,h_2\}$ of $NS(X^{[2]})$. Since $f^*(2e)=f^*(E)\neq E$ and $f^*(E)$ and $E$ are both effective and irreducible, $(f^*(e),e)=-2\lambda\geq 0$. Since $(f^*(e))^2=(e)^2=-2$, $4(x^2+xy-y^2)-2\lambda^2=-2$, hence $\lambda\neq 0$, i.e., $\lambda<0$. Moreover, $(x,y)^2=4(x^2+xy-y^2)=2(\lambda^2-1)>0$, i.e., either $(x,y)\in \Amp(X)$ or $-(x,y)\in \Amp(X)$ by \eqref{ample cone of X}. Moreover, since $(1,0), (1,1)\in \Amp(X)$, $(1,-1,0)$ and $(1,-1,1)$ are ample on $X^{[2]}$ by Lemma \ref{ample divisors}, hence intersections with these imply that $x>0$, i.e., $(x,y)$ is an ample divisor of $X$.
\end{proof}

For a divisor $D\in NS(X^{[2]})$, let $[D]_e$ be the $e$-coordinate with respect to the basis $\{h_1,e,h_2\}$. 

\begin{theorem}\label{main theorem1-1}
$\Aut(X^{[2]})\cong\langle \iota_0,\iota_1,\iota_2 \rangle.$
\end{theorem}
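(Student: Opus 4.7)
The plan is to prove the nontrivial containment $\Aut(X^{[2]})\subseteq \langle \iota_0,\iota_1,\iota_2\rangle$ by induction on the invariant
\[ \mu(f):=\bigl|[f^{*}(e)]_{e}\bigr|, \]
the absolute value of the $e$-coordinate of $f^{*}(e)$ in the basis $\{h_1,e,h_2\}$. If $f$ is natural, then by the Boissi\`ere--Sarti characterization recalled just before the statement, $f$ preserves the diagonal, so $f^{*}(e)=e$ and $\mu(f)=1$; then $f\in \Aut(X)=\langle g\rangle$ by Oguiso \cite{O1}, and the identity $g=\iota_{0}\iota_{1}\iota_{2}$ places $f$ in $\langle \iota_{0},\iota_{1},\iota_{2}\rangle$.

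For non-natural $f$, Lemma \ref{non-natural} combined with the identity $2(x^2+xy-y^2)=\lambda^{2}-1$ arising from $(f^{*}e)^{2}=-2$ forces $\lambda$ to be a negative odd integer; the case $\lambda=-1$ is excluded because it would yield $f^{*}E=-E$, not effective. Hence $\mu(f)\ge 3$. The key computation is the action of $\iota_{n}^{*}$ on $NS(X^{[2]})$. Because $\iota_{n}$ is the covering involution of the degree-two morphism to the Grassmannian and fixes the Pl\"ucker class $u_{n}:=D_{n}[2]-e$ (for which $(u_{n},u_{n})=2$) while acting as $-1$ on its orthogonal complement, $\iota_{n}^{*}$ is the negated reflection $\iota_{n}^{*}(D)=(D,u_{n})u_{n}-D$. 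Substituting $D=f^{*}(e)$ and writing $D_{X}:=xh_{1}+yh_{2}$ yields
\[ \bigl[(f\circ \iota_{n})^{*}(e)\bigr]_{e}=-(D_{X},D_{n})_{X}-3\lambda, \]
so $\mu(f\iota_{n})<\mu(f)$ precisely when $(D_{X},D_{n})_{X}\in(2|\lambda|,4|\lambda|)$.

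To produce an integer $n$ in this interval, identify $NS(X)\otimes \R$ with $\R[\sqrt{5}]$ by $ah_{1}+bh_{2}\mapsto a+b\eta$. Under this identification $D_{n}=\eta^{2n}$ and the intersection pairing becomes $(u,v)=2\,\mathrm{Tr}_{\Q(\sqrt{5})/\Q}(u\bar v)$ (with the bar denoting the nontrivial Galois automorphism), so
\[ (D_{X},D_{n})_{X}=2\bigl(D_{X}\,\eta^{-2n}+\bar D_{X}\,\eta^{2n}\bigr). \]
Since $D_{X}$ is ample, one has $D_{X},\bar D_{X}>0$; the right-hand side is thus a strictly convex function of $n\in\R$ with minimum $4\sqrt{N(D_{X})}=2\sqrt{2(\lambda^{2}-1)}$ at some real $n_{0}$. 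For the integer $n$ nearest $n_{0}$, the inequality $|n-n_{0}|\le \tfrac12$ together with $\cosh(\log\eta)=(\eta+\eta^{-1})/2=\sqrt{5}/2$ yields the upper bound $(D_{X},D_{n})_{X}\le \sqrt{10(\lambda^{2}-1)}$. For every $|\lambda|\ge 3$ both $2\sqrt{2(\lambda^{2}-1)}>2|\lambda|$ and $\sqrt{10(\lambda^{2}-1)}<4|\lambda|$ are immediate, so this $n$ realizes the descent.

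Iterating, $\mu$ strictly decreases through positive odd integers and reaches $1$ in finitely many steps, at which stage $f\iota_{n_{1}}\cdots \iota_{n_{r}}=g^{m}$ is natural. Expressing each $\iota_{n_{i}}$ via Theorem \ref{relations} as $g^{l_{i}}\iota_{k_{i}}g^{-l_{i}}$ and substituting $g=\iota_{0}\iota_{1}\iota_{2}$ gives $f\in \langle \iota_{0},\iota_{1},\iota_{2}\rangle$. The main technical obstacle I expect is the quantitative bound on $(D_{X},D_{n})_{X}$ in $\Q(\sqrt{5})$ guaranteeing \emph{some} integer $n$ in the admissible interval $(2|\lambda|,4|\lambda|)$; once this is in place the descent is routine bookkeeping.
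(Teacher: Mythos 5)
Your proof is correct and shares the paper's overall skeleton --- a descent on the $e$-coefficient of $f^{*}(e)$, composing with Beauville involutions until the composite fixes $e$ and is therefore natural, then rewriting everything in $\iota_0,\iota_1,\iota_2$ via $g=\iota_0\iota_1\iota_2$ and Theorem \ref{relations} --- but the engine driving the descent step is genuinely different. The paper works only with the three matrices in \eqref{iotas} and their conjugates $g^{-l*}\iota_k^{*}g^{l*}$, reduces to three quadratic inequalities in the slope $r'=y'/x'$, and argues that for a suitable power $l$ the ample slope lands in one of the three forbidden intervals; that covering step is the least explicit part of the published argument. You instead use that $\iota_n^{*}$ is the negated reflection $D\mapsto (D,u_n)u_n-D$ in $u_n=D_n[2]-e$ (this is exactly what the matrices \eqref{iotas} encode for $n=0,1,2$, and it propagates to all $n$ by Theorem \ref{relations} since $g^{*}$ fixes $e$), transfer the pairing to $\Q(\sqrt5)$, and use total positivity of the ample class $D_X$ together with convexity/AM--GM to place $(D_X,D_n)_X$ in the window $(2|\lambda|,4|\lambda|)$ for the integer $n$ nearest the minimizer: the bounds $2\sqrt{2(\lambda^2-1)}>2|\lambda|$ and $\sqrt{10(\lambda^2-1)}<4|\lambda|$ do hold for $|\lambda|\ge 3$, and your parity argument forcing $\lambda$ odd with $|\lambda|\ge 3$ is also correct. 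Your version buys a fully quantitative, uniform descent (an explicit $n$ at every stage, with $\mu$ dropping through odd integers), at the cost of invoking the reflection formula and arithmetic in $\Z[\eta]$; the paper's version stays with three explicit matrices but leaves the existence of the right power $l$ comparatively sketchy.

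One small inaccuracy worth fixing: $\iota_n$ is not the covering involution of a degree-two morphism to the Grassmannian (a general line meets the quartic $X_n$ in four points, so $\phi_{D_n}$ has degree six onto its image); it is the residual involution $Z\mapsto \langle Z\rangle\cap X - Z$. This does not affect your argument, since the formula $\iota_n^{*}(D)=(D,u_n)u_n-D$ is nonetheless correct and verifiable against \eqref{iotas}.
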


\begin{proof} Let $f\in \Aut(X^{[2]})$ be a non-natural automorphism of $X^{[2]}$. By Lemma \ref{non-natural}, we have that $f^*(e)=(x,\lambda,y)$ with $(x,y)\in \Amp(X)$ and $\lambda<0$. If $[\iota_k^*f^*(e)]_e\geq 0$ for some $k\in \Z$, then again by Lemma \ref{non-natural}, $\iota_k^*f^*(e)=e$, i.e., $\iota_k^*f^*=g^{n*}$ for some $n\in \Z$. Now by \eqref{injection}, $g^n=f\iota_k$ in $\Aut(X^{[2]})$. Since $g=\iota_0\iota_1\iota_2$ and by Theorem \ref{relations}, $f\in \langle \iota_0,\iota_1,\iota_2 \rangle.$
Next, suppose that for all $k\in \Z$, $[\iota_k^*f^*(e)]_e<0$. Then we claim the following.
\begin{claim*}\label{claim}
For some $m\in \Z$, $[\iota_m^*f^*(e)]_e>\lambda$.
\end{claim*}

\begin{claimproof*}
Suppose that for all $k\in \Z$, $[\iota_k^*f^*(e)]_e\leq \lambda$.
In \text{\cite[Example 1]{L}}, $\iota_0^*,\iota_1^*,\iota_2^*\in O(H^2(X^{[2]},\Z))$ are given as follows with respect to the basis $\{h_1,e,h_2\}$:
\begin{equation}\label{iotas}
\iota_0^*=\begin{bmatrix} 3&2&2\\-4&-3&-2\\0&0&-1 \end{bmatrix}, \hspace{2mm} \iota_1^*=\begin{bmatrix} 5&2&-2\\-6&-3&2\\6&2&-3 \end{bmatrix},\hspace{2mm}  \iota_2^*=\begin{bmatrix} 27&4&-16\\-14&-3&8\\42&6&-25 \end{bmatrix}.
\end{equation}
In particular, $[\iota_0^*f^*(e)]_e=-4x-3\lambda-2y\leq \lambda$, i.e., $2x+y\geq -2\lambda$. Similarly, $[\iota_1^*f^*(e)]_e=-6x-3\lambda+2y\leq \lambda$, i.e., $3x-y\geq -2\lambda$ and $[\iota_2^*f^*(e)]_e=-14x-3\lambda+8y\leq \lambda$, i.e., $7x-4y\geq -2\lambda$. Moreover, by Theorem \ref{relations}, $\iota_{3l+k}^*=g^{-l*}\circ\iota_k^*\circ g^{l*}$ for $l\in \Z$ and $k=0,1,2$. Since $g$ is natural, $g^*(e)=e$, hence $[\iota_{3l+k}^*f^*(e)]_e=[\iota_k^*g^{l*}f^*(e)]_e$, where $k=0,1,$ or $2$. Let $g^{l*} f^*(e)=(x',\lambda,y')$ with $(x',y')\in \Amp(X)$. If we let $r'=y'/x'$, then  $(g^{l*} f^*(e))^2=(e)^2=-2$ gives
\begin{equation}\label{same intersection number}
4x'^2(1+r'-r'^2)=2\lambda^2-2.
\end{equation}

Now for $k=0$, $[\iota_0^*g^{l*}f^*(e)]_e=-4x'-3\lambda-2y'\leq \lambda$ gives that $2x'+y'\geq -2\lambda$. By taking the squares, we have that $(x'(2+2r'))^2\geq 4\lambda^2$. By substituting \eqref{same intersection number}, we have that $x'^2(4+4r'+r'^2)\geq 8x'^2(1+r'-r'^2)+4$ or $x'^2(9r'^2-4r'-4)\geq 4$. However, the left hand side is negative if  $\frac{2-2\sqrt{10}}{9}<r'<\frac{2+2\sqrt{10}}{9}$.  

Similarly for $k=1$, we have  $x'^2(9r'^2-14r'+1)\geq 4$, and the left hand side is negative if  $\frac{7-2\sqrt{10}}{9}<r'<\frac{7+2\sqrt{10}}{9}$. For $k=2$, we have  $x'^2(24r'^2-64r'+41)\geq 4$, and the left hand side is negative if  $\frac{16-\sqrt{10}}{12}<r'<\frac{16+\sqrt{10}}{12}$. Since the ample cone of $X$ is given in \eqref{ample cone of X}, 
for some power $l$, at least one of three inequalities induces a contradiction, hence $[\iota_m^{*}f^*(e)]_e>\lambda$. 
\end{claimproof*}
If we continue this process at least $\vert \lambda \vert-1$ times, we get a contradiction to $[\iota_k^*f^*(e)]_e<0$  for all $k\in \Z$. Hence, we may assume that for some $\iota^*=\iota_{k_1}^*\iota_{k_2}^*\cdots\iota_{k_j}^*$, $[\iota^*f^*(e)]_e\geq 0$. Then again by Lemma \ref{non-natural}, $\iota^*f^*(e)=e$, i.e., $f\iota=g^l$ in $\Aut(X^{[2]})$ for some $l\in \Z$. Again by $g=\iota_0\iota_1\iota_2$ and by Theorem \ref{relations}, $f\in \langle \iota_0,\iota_1,\iota_2 \rangle.$
\end{proof}

\vspace{3mm}

\subsection{}
Next, we will show that $\langle \iota_0,\iota_1,\iota_2\rangle\cong \Z_2\ast \Z_2\ast\Z_2$.

Note that the Picard lattice $NS(X^{[2]})$ of Hilbert square of the K3 surface $X$ has signature $(1,2)$. For an ample divisor $H$ on $X^{[2]}$, we consider the \textit{light cone} of $NS(X^{[2]})$: 
\begin{equation}
\mathcal{L}^+:=\{D\in \NS(X^{[2]})\otimes \R \mid D^2>0, D.H>0\}.
\end{equation}
If we let $H=(1,-1,0)$ with respect to the basis $\{h_1,e,h_2\}$, then  
\begin{equation}\label{positive cone}
\mathcal{L}^+=\{(x,\lambda,y) \in \NS(X^{[2]})\otimes \R \mid 2(x^2+xy-y^2)>\lambda^2, 2x+y+\lambda>0\}.
\end{equation}
Since the signature of $\NS(X^{[2]})$ is $(1,2)$, $\mathcal{L}^+/\R^+$ with distance $\vert AB\vert$ defined by 
\begin{equation}\label{distance}
\Vert A\Vert \Vert B\Vert \cosh\vert AB\vert=(A.B)
\end{equation}
is isomorphic to $2$-dimensional hyperbolic geometry $\mathbb{H}^2$, where $\Vert A\Vert=\sqrt{(A.A)}$. 

Note that for each $k\in \Z$, $\iota_k^*$ preserves distances of $\sL^+/\R^+$, hence $\iota_k^*\in \Aut(\BH^2)$ and it is an involution. Fixed points of $\iota_0^*$, $\iota_1^*$ and $\iota_2^*$ are $[1,-1,0]$, $[1,-1,1]$, and $[2,-1,3]$, respectively. Here $[x,\lambda,y]$ denotes a class of points modulo $\R^+$. Note that since these fixed points correspond to interior points of $\BH^2$, $\iota_0^*,\iota_1^*,\iota_2^*$ are all elliptic isometries.

\begin{remark}
Note that fixed points $[1,-1,0],[1,-1,1],[2,-1,3]$ correspond to ample divisors of square $2$ on $X^{[2]}$ with respect to the Beauville-Bogomolov-Fujiki's form.
\end{remark}

In the rest, we will show that there is no relation between $\iota_0^*,\iota_1^*,\iota_2^*$. 

It is known that if $\gamma_1,\gamma_2$ are two geodesics in $\BH^2$ and $z_i\in \gamma_i$ are two points on them, then there exists an isometry $g\in M\ddot{\text{o}}b^+(\BH^2)$ such that $g(\gamma_1)=\gamma_2$ and $g(z_1)=z_2$. Hence we may assume that two points of $[1,-1,0],[1,-1,1],[2,-1,3]$ map to $(0,1),(0,k)\in \BH^2$ with $k>1$ on the $y$-axis of $\BH^2$, a hyperbolic line. Then the remaining point maps to $(b,m)\in \BH^2$ with $b\neq 0$. Moreover, $\iota_0^*,\iota_1^*,\iota_2^*$ correspond to the elliptic isometries $T,U,V$ whose fixed points are $(b,m), (0,1), (0,k)$ respectively. Hence these M$\ddot{\text{o}}$bius transformations correspond to the following matrices in $\SL(2,\R)$; 
\begin{equation}\label{U and V}
U=\begin{bmatrix} 0& 1\\ -1&0 \end{bmatrix}, \hspace{2mm}V=\begin{bmatrix} 0 &k\\ -1/k&0 \end{bmatrix}
\end{equation}
and 
\begin{equation}\label{T}
T=\begin{bmatrix} 1&b \\ 0&1 \end{bmatrix} \begin{bmatrix} 0 &m\\ -1/m& 0\end{bmatrix} \begin{bmatrix} 1&-b \\ 0&1 \end{bmatrix}=\begin{bmatrix} b/m &-(b^2+m^2)/m\\ 1/m& -b/m\end{bmatrix}. 
\end{equation}
Note that all traces of $T,U,V$ are zero, i.e, elliptic isometries with fixed points $(b,m),(0,1),(0,k)$ respectively. Note also that the first matrix in \eqref{T}
corresponds to the translation $z\mapsto z+b$.

Consider the group $G$ generated by $T,U,V$. Note that any two of three generates a free group $\Z_2\ast \Z_2$ (Example \ref{two elliptic}). Hence any element in $G$ can be written as $TX_1TX_2\cdots TX_k$, where each $X_i$ is of the form
\begin{equation}\label{X forms}
(UV)^n,(UV)^nU,(VU)^s,(VU)^sV
\end{equation}
in the free group $\Z_2\ast \Z_2$ generated by $U,V$, where $n,s\in \Z$.
\begin{lemma}\label{TX hyperbolic}
$TX_i$ is hyperbolic for any $X_i$ in \eqref{X forms}.
\end{lemma}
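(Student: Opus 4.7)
By Theorem \ref{classification}, it suffices to show $\trace^2(TX_i)>4$ in each case. The strategy is (i) to read off the pairwise hyperbolic distances between the fixed points of $T,U,V$ from the intersection pairing on $NS(X^{[2]})$, (ii) use them to pin down $b,m,k$, and (iii) exploit that $UV$ and $VU$ are diagonal to compute $\trace(TX_i)$ explicitly for each of the four forms in \eqref{X forms}.

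For steps (i)--(ii), I would apply \eqref{distance} to $A_0=[1,-1,0]$, $A_1=[1,-1,1]$, $A_2=[2,-1,3]$: using the intersection form on $NS(X^{[2]})$ one checks that all three classes have self-intersection $2$, with $A_0\cdot A_1=A_1\cdot A_2=4$ and $A_0\cdot A_2=12$, so $\cosh|A_0A_1|=\cosh|A_1A_2|=2$ and $\cosh|A_0A_2|=6$. Since $U,V$ fix $(0,1),(0,k)$, the axial distance along the imaginary axis gives $k+k^{-1}=4$, i.e.\ $k=2+\sqrt{3}$. Applying $\cosh d(z_1,z_2)=1+|z_1-z_2|^2/(2y_1y_2)$ at the point $(b,m)$ then yields $b^2+(m-1)^2=2m$ and $b^2+(m-k)^2=10mk$, whence the key identities
\begin{equation*}
b^2+m^2 \;=\; 4m-1 \;=\; 2-\sqrt{3},\qquad (b/m)^2 \;=\; 5/3.
\end{equation*}

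For step (iii), $UV=\diag(-k^{-1},-k)$ and $VU=\diag(-k,-k^{-1})$, so a short matrix computation gives
\begin{equation*}
\trace(T(UV)^n) \;=\; (-1)^n(b/m)(k^{-n}-k^n),\qquad \trace(T(UV)^nU) \;=\; (-1)^n[(b^2+m^2)k^n+k^{-n}]/m,
\end{equation*}
with analogous expressions for $T(VU)^s$ and $T(VU)^sV$ via the substitutions $(VU)^s=(UV)^{-s}$ and a single index shift. For the first type ($n\neq 0$), squaring and using $(k+k^{-1})^2-4=12$ gives $\trace^2=(5/3)(k^n-k^{-n})^2\geq (5/3)\cdot 12 = 20 > 4$. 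For the second type, AM--GM on the two positive summands gives $[(b^2+m^2)k^n+k^{-n}]^2\geq 4(b^2+m^2)$, so $\trace^2\geq 4(4m-1)/m^2 = 32/3 > 4$ (evaluating with $m=(3-\sqrt{3})/4$). The main obstacle is verifying that these numerical bounds come out strictly greater than $4$: the identity $(b/m)^2=5/3$ is precisely what prevents the tightest case ($|n|=1$ in the first family) from failing, so the argument hinges on this fortuitous ratio arising from the intersection matrix in \eqref{Picard lattice of Oguiso}.
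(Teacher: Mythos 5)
Your proof is correct, and for the hardest case it takes a genuinely different route from the paper. Both arguments reduce to the trace criterion of Theorem \ref{classification} and both dispose of the forms $T(UV)^nU$ and $T(VU)^sV$ the same way (a positive term plus an $x+x^{-1}\geq 2$ bound, no knowledge of $b,m,k$ needed). The divergence is in the family $T(VU)^s$, where the trace $\pm\frac{b}{m}(k^s-k^{-s})$ has no a priori lower bound: the paper handles $s=1$ \emph{indirectly}, identifying $TVU$ with a product $\iota_i^*\iota_j^*\iota_k^*$, invoking Theorem \ref{trace of any three elliptic} to reduce to the single word $\iota_2^*\iota_1^*\iota_0^*$, exhibiting its two fixed points on $\partial\mathcal{L}^+$ from the explicit $3\times 3$ matrix, and then running an induction $|k^{n+1}-k^{-n-1}|\geq|k^n-k^{-n}|$; you instead solve for $b,m,k$ outright from the intersection numbers via \eqref{distance} and evaluate the trace. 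Your numbers check out: the pairwise products $A_0\cdot A_1=A_1\cdot A_2=4$, $A_0\cdot A_2=12$ give $k+k^{-1}=4$, $m=(3-\sqrt 3)/4$, $(b/m)^2=5/3$, and hence $\trace^2(TVU)=\tfrac{5}{3}\cdot 12=20$ --- which agrees with the paper's $3\times 3$ matrix, whose trace $19=\trace^2-1$ under the symmetric-square correspondence. Your route buys exact quantitative data (in Beardon's notation $\lambda=\sqrt 5>1$, which immediately feeds Theorem \ref{signature}) and replaces the paper's appeal to Theorem \ref{trace of any three elliptic} and the induction by a one-line monotonicity of $k^n+k^{-n}$; the paper's route avoids solving for the fixed-point coordinates altogether. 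Two small points to make explicit when writing this up: you should note that $A_0,A_1,A_2$ are linearly independent in $NS(X^{[2]})\otimes\R$ (so the three fixed points are not on a common geodesic and $b\neq 0$, which your $b^2=\tfrac58(2-\sqrt3)>0$ confirms), and you should record, as you do, that $n=0$ must be excluded from the form $(UV)^n$, since $T$ itself is elliptic --- the lemma is only used for nontrivial syllables $X_i$.
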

  
\begin{proof}
We use the classification of a hyperbolic isometry by the trace (Theorem \ref{classification}). First, 
\begin{equation}
TU=\begin{bmatrix} (b^2+m^2)/m& b/m \\ b/m& 1/m \end{bmatrix}
\end{equation}
and its trace is $b^2/m+m+1/m>2$, hence it is hyperbolic. Similarly,
\begin{equation}
T(UV)^sU=TU(VU)^s=\begin{bmatrix} \frac{b^2+m^2}{m}k^s&* \\ *&\frac{1}{m}k^{-s} \end{bmatrix}
\end{equation}
and its trace is $\frac{b^2}{m}k^{s}+mk^s+(mk^s)^{-1}>2$, hence it is hyperbolic.
Moreover,
\begin{equation}
T(VU)^{s-1}V=TU(UV)^s=\begin{bmatrix} \frac{b^2+m^2}{m}k^{-s}&* \\ *&\frac{1}{m}k^{s} \end{bmatrix}
\end{equation}
and its trace is $\frac{b^2}{m}k^{-s}+mk^{-s}+(mk^{-s})^{-1}>2$, hence it is also hyperbolic.

Next,
\begin{equation}
T(VU)^s=TU(UV)^sU=\begin{bmatrix} -\frac{b}{m}k^s&* \\ *&\frac{b}{m}k^{-s} \end{bmatrix}
\end{equation}
and its trace is $\vert \frac{b}{m}(k^{s}-k^{-s})\vert>2$ by induction. Indeed for $s=1$, $TVU$ corresponds to one of $\iota_i^*\iota_j^*\iota_k^*$ where $i,j,k \in \{0,1,2\}$ are distinct. It is enough to show that $\iota_2^*\iota_1^*\iota_0^*$ is hyperbolic by Theorem \ref{trace of any three elliptic}.
\begin{equation}
\iota_2^*\iota_1^*\iota_0^*=\begin{bmatrix} 5& 0&8\\0&1&0\\8&0&13 \end{bmatrix}
\end{equation}
has two fixed points 
\begin{equation}
((-1+\sqrt{5})/2,0,1)\hspace{2mm}  \text{ and }\hspace{2mm}(-(1+\sqrt{5})/2,0,1)
\end{equation}
on the boundary of $\sL^+$, hence they correspond to two points on $\partial \BH^2$.
Thus $TVU$ is hyperbolic, i.e., the absolute value of its trace $\vert \frac{b}{m}(k-k^{-1})\vert >2$. For $s=2$, 
\begin{equation}
\begin{array}{lcl}
\vert \frac{b}{m}(k^2-k^{-2})\vert &=&\vert\frac{b}{m}(k-k^{-1})\vert(k+k^{-1})\\
&>&\vert \frac{b}{m}(k-k^{-1})\vert\\
&>&2.\\
\end{array}
\end{equation}
Now suppose that $\vert k^n-k^{-n}\vert \geq \vert k^{n-1}-k^{-n+1}\vert$. Then 
\begin{equation}
\begin{array}{lcl}
\vert k^{n+1}-k^{-n-1}\vert &=&\vert (k^n-k^{-n})(k+k^{-1})-(k^{n-1}-k^{-n+1})\vert \\
&\geq &  \vert (k^n-k^{-n})(k+k^{-1}-1)\vert \\
&\geq & \vert k^n-k^{-n}\vert. \\
\end{array}
\end{equation}
Hence the trace  $\vert \frac{b}{m}(k^{s}-k^{-s})\vert>2$ and $T(VU)^s$ is hyperbolic.

Similarly,
\begin{equation}
T(UV)^{s+1}=TU(VU)^sV=\begin{bmatrix} -\frac{b}{m}k^{-s-1}&* \\ *&\frac{b}{m}k^{s+1} \end{bmatrix}
\end{equation}
and its trace is $\vert \frac{b}{m}(k^{s+1}-k^{-s-1})\vert>2$, hence $T(UV)^{s+1}$ is hyperbolic.
\end{proof}


Now by Theorem \ref{signature}, $G$ has the signature $(0;2,2,2;0,1)$ with $g=0,m_1=m_2=m_3=2,s=0,$ and $t=1$. Since $s+t>0$,  $G$ is a free product of three elliptic elements by \eqref{free product of Fuchsian groups},  i.e., $G=\langle T,U,V\rangle\cong\langle \iota_0^*,\iota_1^*,\iota_2^*\rangle\cong \Z_2\ast \Z_2\ast \Z_2$. Now by \eqref{injection}, in $\Aut(X^{[2]})$, $\iota_0,\iota_1,\iota_2$ has no relation, hence $\Aut(X^{[2]})\cong \langle \iota_0,\iota_1,\iota_2\rangle\cong \Z_2\ast \Z_2\ast \Z_2$.





\subsection*{Acknowledgements}
The author would like to thank Professor Keiji Oguiso for letting me to read his preprint.

\end{document}